\newtheorem{defn}{Definition}[section]
\newtheorem{lemma}[defn]{Lemma}
\newtheorem{prop}[defn]{Proposition}
\newtheorem{theo}[defn]{Theorem}
\newtheorem{example}[defn]{Example}
\newtheorem{pergunta}[defn]{Question}
\newtheorem{question}[defn]{Question}
\newcommand{\T}{\mathbb{T}^2}
\newcommand{\R}{\mathbb{R}}
\newcommand{\Z}{\mathbb{Z}}
\newcommand{\N}{\mathbb{N}}
\title[Random cocycles in $\T$]
{Rotation sets for random compositions of $\T$ homeomorphisms}
\author{C. Freijo}
\address[Freijo]{Instituto de Matem\' atica e Estat\' istica da Universidade de S\~ao Paulo,
R. do Mat\~ ao, 1010 - Vila Universitaria, S\~ ao Paulo, Brasil}
\email{fabiotal@ime.usp.br}
\author{F.A. Tal}
\address[Tal]{Instituto de Matem\' atica e Estat\' istica da Universidade de S\~ao Paulo,
R. do Mat\~ ao, 1010 - Vila Universitaria, S\~ ao Paulo, Brasil}
\email{fabiotal@ime.usp.br}
\thanks{C. Freijo was supported by Fapesp. F. Tal Was partially supported by Fapesp and CNPq}
\begin{document}

\begin{abstract}

We study cocycles of homeomorphisms of $\T$ in the isotopy class of the identity over shift spaces, using as a tool a novel definition of rotation sets inspired in the classical work of Miziurewicz and Zieman. We discuss different notions of rotation sets, for the full cocyle as well as for measures invariant by the shift dynamics on the base. We present some initial results on the shape of rotation sets, continuity of rotation sets for shift-invariant measures, and bounded displacements for irrotational cocyles, as well as a few  interesting examples in an attempt to motive the development of the topic.
\end{abstract}
	
\maketitle

\section{Introduction}

Rotation sets and rotation theory have become a powerful and extensively studied tool in the understanding of the dynamics of homeomorphisms of the two-dimensional torus. For a torus homeomorphism $g:\T\to\T$ homotopic to the identity, the seminal work of Misiurewicz and Ziemian~\cite{MZ} established that the rotation set $\rho(\widetilde g)$ of a lift $\widetilde g:\R^2\to\R^2$ is always a nonempty compact convex subset of $\R^2$. This structural result laid the foundation for a rich theory connecting geometric properties of rotation sets with dynamical features of the system. In particular, Franks~\cite{Franks} proved that whenever the rotation set has nonempty interior, the system admits infinitely many periodic points with distinct rotation vectors, while Llibre and Mackay~\cite{LM} showed that in this case the topological entropy is necessarily positive.

The purpose of this paper is to explore extensions of this theory to the setting of cocycles whose fiber dynamics is given by torus homeomorphisms in the identity isotopy class. More precisely, we consider cocycles over product spaces, with particular focus on the case where the base is a shift space, and investigate how notions from rotation theory can be adapted to this context. In doing so, we introduce several different definitions of rotation sets for such cocycles and examine how they relate to each other as well as some of their properties. These include their possible ``shapes," their continuity with respect to the base invariant measure, the distinction between essential and inessential points for the dynamics, and the appearance of bounded displacement or sublinear behavior.

For instance, after defining the Misiurewicz-Zieman rotation set of a cocycle, we show that
\begin{theo}
The Misiurewicz-Ziemian type rotation set $\rho_{mz}(\widetilde F)$ associated to a cocycle $\widetilde F$ over the full shift in $\Sigma=\{0,1\}^\mathbb{Z}$ is always a nonempty, compact, and connected subset of $\R^2$.
\end{theo}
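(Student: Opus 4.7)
The plan is to identify $\rho_{mz}(\widetilde F)$ with the set of rotation vectors of $F$-invariant probability measures, which is manifestly convex and hence connected; the alternative, direct topological route has subtleties I will comment on below. Assume the natural Misiurewicz--Ziemian definition $\rho_{mz}(\widetilde F)=\bigcap_{N\ge1}\overline{\bigcup_{n\ge N}Q_n}$ with $Q_n=\{(\widetilde F^n_\omega(x)-x)/n:\omega\in\Sigma,\ x\in\R^2\}$, and introduce the displacement $\phi:\Sigma\times\T\to\R^2$ defined by $\phi(\omega,x)=\widetilde f_{\omega_0}(\widetilde x)-\widetilde x$ for any lift $\widetilde x$ of $x$; this is well defined and continuous because each $\widetilde f_i-\mathrm{id}$ is $\Z^2$-periodic, and uniformly bounded by some constant $C$. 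A telescoping identity shows $(\widetilde F^n_\omega(\widetilde x)-\widetilde x)/n = \tfrac{1}{n}S_n\phi(\omega,x)$, so $Q_n\subset\overline{B_C(0)}$, and $\rho_{mz}(\widetilde F)$ is a decreasing intersection of non-empty compact subsets of $\overline{B_C(0)}$, hence non-empty and compact.

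For connectedness, introduce $\rho_\mu(\widetilde F):=\{\int\phi\,d\mu:\mu\in M(F)\}$, where $M(F)$ is the space of $F$-invariant Borel probability measures on the compact metrizable space $\Sigma\times\T$. By Krylov--Bogolyubov $M(F)$ is non-empty, and by Banach--Alaoglu together with continuity of $F$ it is weak-$*$ compact and convex; since $\phi$ is continuous, the evaluation $\mu\mapsto\int\phi\,d\mu$ is continuous and affine, so $\rho_\mu(\widetilde F)$ is a compact convex (in particular connected) non-empty subset of $\R^2$. I then aim to establish $\rho_{mz}(\widetilde F)=\rho_\mu(\widetilde F)$. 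The inclusion $\rho_\mu\subset\rho_{mz}$ uses Birkhoff's theorem applied to an ergodic $\mu$: on a full-measure set the average $\tfrac1nS_n\phi$ converges to $\int\phi\,d\mu$, so $\int\phi\,d\mu\in\rho_{mz}$; the general case follows by the ergodic decomposition. Conversely, given $v=\lim_k(\widetilde F^{n_k}_{\omega_k}(x_k)-x_k)/n_k$, the empirical measures $\mu_k=\tfrac1{n_k}\sum_{i=0}^{n_k-1}\delta_{F^i(\omega_k,\pi x_k)}$ satisfy $\int\phi\,d\mu_k\to v$, and any weak-$*$ accumulation point is $F$-invariant by a standard Krylov--Bogolyubov argument, giving $v=\int\phi\,d\mu\in\rho_\mu$.

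The main obstacle is the identification $\rho_{mz}=\rho_\mu$; both inclusions rest on standard tools (Birkhoff's theorem, weak-$*$ compactness) and use only continuity of $\phi$ together with compactness of $\Sigma\times\T$, so no special feature of the full shift enters. Note that this plan in fact yields the stronger conclusion that $\rho_{mz}(\widetilde F)$ is convex. If the paper's intended proof establishes only connectedness, it likely proceeds by a more direct topological route: each slice $P_w/n=\{(\widetilde f_w(x)-x)/n:x\in\R^2\}$ is connected (a continuous $\Z^2$-periodic image of $\R^2$), and the admissibility of every concatenation $ww'$ on the full shift allows one to exhibit connected subsets of $Q_{n+n'}$ consisting of convex combinations $\alpha u+\beta v$ with $u\in P_w/n$, $v\in P_{w'}/n'$, and $\alpha=n/(n+n')$. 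One could then attempt a contradiction argument against a proposed disconnection $\rho_{mz}=A\sqcup B$, but controlling how the pieces $P_w/n$ can sit inside $A^\delta\cup B^\delta$ for a general geometric configuration of $A,B\subset\R^2$ is delicate --- precisely the subtlety the measure-theoretic detour bypasses.
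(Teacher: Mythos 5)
There is a genuine gap, and it is fatal to your main route. Your argument hinges on the identification $\rho_{mz}(\widetilde F)=\rho_{mes}(\widetilde F)$ (your $\rho_\mu$), and specifically on the inclusion $\rho_{mes}\subset\rho_{mz}$, which you establish for ergodic measures via Birkhoff and then assert that ``the general case follows by the ergodic decomposition.'' That last step is exactly where the proof breaks: the ergodic decomposition only expresses $\int\phi\,dm$ as a barycenter of rotation vectors of ergodic measures, so it places $\int\phi\,dm$ in the \emph{convex hull} of $\rho_{erg}$, not in $\rho_{mz}$ itself --- unless $\rho_{mz}$ is already known to be convex, which is precisely what you are in the middle of proving. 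In the classical single-map setting this circularity is broken by Misiurewicz and Ziemian's independent geometric proof of convexity; for cocycles convexity is simply false. The paper gives the example $\widetilde f_0=\widetilde\phi$, $\widetilde f_1=\widetilde\phi^{-1}$ with $\rho(\widetilde\phi)=[0,1]^2$, for which $\rho_{mz}(\widetilde F)=[-1,0]^2\cup[0,1]^2$. Taking ergodic measures $\nu_0,\nu_1$ for $\phi,\phi^{-1}$ realizing the extremal vectors $(1,0)$ and $(0,-1)$, and setting $m=\tfrac12\left(\delta_{0^\infty}\times\nu_0+\delta_{1^\infty}\times\nu_1\right)$ (with $0^\infty,1^\infty$ the constant sequences), one obtains an $F$-invariant measure with rotation vector $(1/2,-1/2)\notin\rho_{mz}(\widetilde F)$. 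So $\rho_{mes}\not\subset\rho_{mz}$ in general, your stronger conclusion that $\rho_{mz}$ is convex contradicts the paper's own example, and the measure-theoretic detour collapses. (The paper proves only $\rho_{mz}\subset\rho_{mes}$ and explicitly records the equality as an open question.)

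Your non-emptiness and compactness argument (uniformly bounded displacement, decreasing intersection of nonempty compacta) is fine and matches the paper. For connectedness the paper carries out essentially the ``direct topological route'' that you sketch at the end and set aside as delicate: assuming $\rho_{mz}\subset K_1\sqcup K_2$ with $\operatorname{dist}(K_1,K_2)\ge 3\delta$, it uses (i) $\|\rho_{n+1}(x,p)-\rho_n(x,p)\|<\delta$ for all large $n$, so each orbit's averages are eventually trapped in a single $\delta$-neighborhood $W_i$ of $K_i$; (ii) connectedness of $\rho_n(x,\T)$ for fixed $x$, so the trapping side depends only on the word $x$; (iii) a cylinder-continuity estimate showing the trapping side is determined by finitely many coordinates of $x$; and finally the concatenation of a word realizing $W_1$ with one realizing $W_2$, producing a single orbit whose averages must eventually lie in both $W_1$ and $W_2$, a contradiction. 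An argument of this type, rather than the identification with the measure rotation set, is what is needed here.
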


Also, we introduce,  in Section~\ref{section:essentialpoints}, the concept of essential  points for the dynamic of the cocycle, trying to emulate the ideas of \cite{inventiones1, fullyessential}, where they were fundamental in describing what constitutes an homeomorphism having a rotational behaviour that ``sees'' the whole torus. We use it to obtain the following connection with the problem of identifying the existence of sublinear displacemnets not captured by the rotation set:

\begin{theo}
For locally constant, area-preserving cocycles $\widetilde F$ with inessential fixed point sets and trivial rotation set, either the set of essential points is an essential subset of $\R^2$ or $\widetilde F$ has uniformly bounded displacements. In particular, the dynamics exhibits a strong bounded deviation property.
\end{theo}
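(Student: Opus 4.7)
The plan is to proceed by contrapositive: assume the essential point set $E\subset\R^2$ is \emph{not} essential (i.e.\ its projection to $\T$ is inessential), and conclude uniformly bounded displacements. The starting observation is that, by the definition developed in Section~\ref{section:essentialpoints}, a point $x$ failing to be essential means that there exists an open topological disk (a "free" neighborhood) $U\ni x$ in $\T$ whose orbits under the cocycle fibers remain in a bounded, planarly embedded region when lifted to $\R^2$. If the essential set itself is inessential in $\R^2$, then the complement $\T\setminus\pi(E)$ is an essential open set, and can be covered by such free disks coming from non-essential points.

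Next I would exploit area preservation. Since each fiber map preserves Lebesgue measure, the skew-product on $\Sigma\times\T$ preserves the product of the Bernoulli measure with Lebesgue, so Poincar\'e recurrence applies. In a classical Brouwer-style argument, combined with the assumption that the fixed point set is inessential and the rotation set is $\{0\}$, recurrence inside a free topological disk forces orbits to return arbitrarily close to themselves without accumulating net displacement. Concretely, a lift of the orbit of any point in a free disk cannot escape to infinity: otherwise one could extract a sequence of return iterates producing a nonzero rotation vector, contradicting triviality of $\rho_{mz}(\widetilde F)$. This gives, for each word $\omega\in\Sigma$ and each $x$ in a free disk, a finite bound $M(\omega,x)$ on $\sup_n \|\widetilde F^n_\omega(\widetilde x)-\widetilde x\|$.

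To promote this pointwise bound to a uniform one, I would use the locally constant hypothesis. Because the cocycle depends only on a finite window of coordinates, the family of fiber compositions of length $n$ ranges over only $2^{n+c}$ distinct homeomorphisms for some constant $c$. Covering $\T$ by finitely many free disks (from the assumed inessentiality of $E$, using compactness), and combining with Theorem~1 on the compactness and connectedness of $\rho_{mz}(\widetilde F)$ together with area preservation applied to each finite composition, one obtains a bound $M$ independent of both $\omega$ and the fiber point. The "strong bounded deviation" statement then follows from the fact that on the essential-point locus (if nonempty but inessential) the same bound transfers by continuity and density of free disks in its complement.

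The main obstacle I anticipate is the second step: transferring the single-homeomorphism Brouwer/recurrence argument to the cocycle setting. In the classical case one invokes Franks' lemma or a Birkhoff-type construction inside a topological disk for a \emph{fixed} map $f$; here the disk must remain "free" under an arbitrary sequence of fiber maps. Controlling how a free disk evolves under random compositions, and ensuring that the finitely many generators in the locally constant cocycle do not collectively spread it to an essential annulus despite area preservation, is the subtle point. Resolving this likely requires an iterated maximal isotopy argument applied uniformly to the finite generating set, which is where the locally constant hypothesis plays its decisive role.
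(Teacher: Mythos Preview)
Your overall contrapositive structure and the final compactness step are on the right track, but there are two genuine gaps.

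First, your mechanism for obtaining the pointwise bound is not the one that works, and you correctly flag it as the obstacle. Saying that an inessential point has a neighborhood whose cocycle orbit ``remains in a bounded, planarly embedded region when lifted to $\R^2$'' conflates two things: inessentiality of $U_B\subset\T$ only tells you that each connected component of $\pi^{-1}(U_B)$ is a disk, not that a given lifted orbit stays in a single such component. A priori the lift could hop among infinitely many $\Z^2$-translates and still escape to infinity. The paper closes this gap by a concrete two-step argument that does not invoke Brouwer theory or Poincar\'e recurrence for the cocycle. It first passes to the regularized filling $\text{Fill}'(U_B)$, which (by area preservation) is a \emph{finite} union of open disks $D_1,\dots,D_r$ that are permuted by both generators $f_0,f_1$; the diameter of each $D_k$ is bounded using the triple boundary lemma (Theorem~\ref{tripleboundary}) applied to a suitable power of $f_0$, and this is precisely where the hypothesis that $\text{Fix}(f_0^j)$ is inessential enters. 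It then shows that for each $k$ there is a \emph{unique} $v_k\in\Z^2$ with $\widetilde f_x^n(\widetilde D_1)=\widetilde D_k+v_k$ whenever $f_x^n(D_1)=D_k$: if two different translates occurred, one concatenates the corresponding finite words into a periodic word whose orbit accumulates a nonzero rotation vector, contradicting $\rho_{mz}(\widetilde F)=\{0\}$. This periodic-word construction is where the locally constant hypothesis is actually used, not in a counting of compositions as you suggest.

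Second, your uniformization step proposes to cover $\T$ by finitely many free disks. This is impossible in general: $\text{Ess}(F)$ may be nonempty, so $\text{Ine}(F)$ need not be all of $\T$. What the hypothesis gives you is that $\text{Ine}(F)$ is fully essential, hence contains two non-homotopic essential loops. The paper lifts these to bound a compact curve $\Gamma\subset\R^2$ enclosing a fundamental domain, covers $\Gamma$ (not $\T$) by finitely many of the disks $D_{p_j}$ from the pointwise lemma, and then uses that a bound on $\widetilde f_x^n(\Gamma)$ forces a bound on the image of the enclosed fundamental domain.
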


Finally, it is relatively easy to show examples were the Misiurewicz-Zieman rotation set is not convex in this setting. But we can transform the study $\varepsilon$-pseudoorbits of a given homeomorphism map $g$ into a study of cocycles and associate here a rotation set. We show that: 

\begin{theo}
If $g$ is conservative of if the rotation set of a lift $\widetilde g$ of $g$ has nonempty interior, then its $\varepsilon$-pseudoorbits rotation set is convex.
\end{theo}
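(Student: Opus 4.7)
My plan is to realize any convex combination $tv_1+(1-t)v_2$ of two rotation vectors $v_1,v_2$ in the $\varepsilon$-pseudoorbit rotation set as the rotation vector of a single $\varepsilon$-pseudoorbit, constructed by alternating long segments of pseudoorbits realizing $v_1$ and $v_2$, glued by short ``bridge'' $\varepsilon$-pseudoorbits on $\T$. The quantitative observation is that each $\varepsilon$-pseudoorbit step has lifted displacement bounded by a constant $C=C(\varepsilon,g)$, so a bridge of length $M$ contributes at most $CM$ to the total lifted displacement; if $M$ stays bounded while the segments grow, the bridge contribution is asymptotically negligible and the rotation vector of the concatenation converges to $tv_1+(1-t)v_2$.

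Concretely, I would fix $\varepsilon$-pseudoorbits $(x_n^1)$ and $(x_n^2)$ with lifts satisfying $\widetilde x_{N_i}^i-\widetilde x_0^i=N_iv_i+o(N_i)$ for arbitrarily large $N_i$, and form the concatenated pseudoorbit taking $N_1^{(k)}$ steps of the first, then a bridge landing on $x_0^2$, then $N_2^{(k)}$ steps of the second, and so on, with $N_1^{(k)}/(N_1^{(k)}+N_2^{(k)})\to t$. Existence of uniformly short bridges then reduces to $\varepsilon$-chain transitivity of $g$ on $\T$: since the set of pairs $(x,y)\in\T\times\T$ joined by an $\varepsilon$-pseudoorbit of length at most $M$ is open in $(x,y)$ by continuity of $g$, compactness of $\T\times\T$ yields a single $M$ that works for all pairs once chain transitivity is established.

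The core of the argument is therefore to prove $\varepsilon$-chain transitivity of $g$ under each hypothesis. If $g$ is conservative, Poincar\'e recurrence gives that Lebesgue-a.e.\ point is recurrent, so the non-wandering set has full Lebesgue measure; being closed and with Lebesgue having full support, it must equal $\T$, and hence the chain-recurrent set is all of $\T$. The $\varepsilon$-chain equivalence classes in the chain-recurrent set are open subsets of $\T$, again using continuity of $g$ to perturb the endpoints of a pseudoorbit, so connectedness of $\T$ forces a single class. If $\rho(\widetilde g)$ has nonempty interior, Franks' theorem \cite{Franks} yields periodic orbits realizing every rational vector in the interior, and one can combine their existence with the rotational horseshoes of Llibre--MacKay \cite{LM} to show that every point of $\T$ is $\varepsilon$-chain related to such a periodic orbit, yielding chain transitivity by the same connectedness argument. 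I expect this second case to be the main obstacle: ruling out isolated invariant pockets without the aid of an invariant measure requires genuinely exploiting the rotational structure, and one may need to invoke results from the literature on dynamics with nonempty-interior rotation set rather than argue from scratch.
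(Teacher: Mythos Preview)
Your plan for the conservative case is essentially the paper's: once $\Omega(g)=\T$, every point is $\varepsilon$-chain recurrent, the $\varepsilon$-chain classes are open, and connectedness of $\T$ gives a single class, hence a uniform bridge length $M$; the concatenation then goes through exactly as you describe. The paper's Lemma~\ref{1} records precisely this uniform $N_0$ and then builds a periodic pseudoorbit realizing the convex combination.

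The genuine gap is in the nonempty-interior case. You reduce everything to $\varepsilon$-chain transitivity of $g$ on all of $\T$, but this need not hold when $g$ is not conservative: a strongly contracting hyperbolic sink, for instance, traps $\varepsilon$-pseudoorbits in a $2\varepsilon$-neighborhood by a geometric series, and such sinks can coexist with a rotation set of nonempty interior. Your suggested route through Franks and Llibre--MacKay gives lots of periodic orbits and horseshoes, but it does not rule out isolated attracting pockets, and you yourself flag this as the main obstacle without resolving it. So the bridging step, as stated, can fail.

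The paper circumvents this entirely. Instead of bridging \emph{arbitrary} endpoints, it fixes a single recurrent point $p_0$ that is generic for an ergodic measure with totally irrational rotation vector and shows (using the ``strictly toral'' theory of \cite{inventiones1}) that both $\Theta^{+}_{\varepsilon}(p_0)$ and $\Theta^{-}_{\varepsilon}(p_0)$ are fully essential. From this one extracts fundamental domains $R_1,R_2$ whose boundaries lie in $\Theta^{+}_{\varepsilon}(p_0)$ and $\Theta^{-}_{\varepsilon}(p_0)$ respectively. The key Lemma~\ref{magiclemma} then says: for any $(x,\widetilde q)$ and any $n$, there is a nearby point $\widetilde q'\in\partial\widetilde R_1$ with $\widetilde f_x^n(\widetilde q')\in\partial\widetilde R_2$, so one obtains a \emph{periodic} pseudoorbit through $p_0$ whose displacement differs from $\widetilde f_x^n(\widetilde q)-\widetilde q$ by at most a fixed constant $L$. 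Convexity then follows by concatenating these periodic loops through $p_0$, with no need to bridge between arbitrary points of $\T$. This replacement of ``global chain transitivity'' by ``fully essential reachability from one good point plus a fundamental-domain crossing argument'' is the idea you are missing.
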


The paper is intended as a way to open a new direction of study, and it presents several different questions we believe are relevant.

\section{Preliminaries}
Let $X$ be a compact metric space and $\Sigma=X^{\mathbb{Z}}$ be the infinite product space endowed with the product topology, then the left-shift map over $\Sigma$ is the continuous function $\sigma\colon\Sigma\to \Sigma $ satisfying $\sigma\left(({x}_n)_{n\in\mathbb{Z}}\right)=({x}_{n+1})_{n\in\mathbb Z}$. 

For $Y$ a topological space, a {random composition of $Y$ maps over $\sigma$} is defined as the cocycle obtained by the continuous transformation $F\colon \Sigma\times Y\to \Sigma\times Y$, that sends $\{x\}\times Y$ into the iterated fiber $ \{\sigma(x)\} \times Y$ and taking the form 
 $$ F(x,p)=(\sigma(x), f_x(p)),$$
where each $f_x$ is a continuous map of $Y$.  In particular we observe that the iterations of $F$ are given by $$F^n(x,p)=(\sigma^n(x), f^n_x(p)),$$where $f^n_x\coloneq f_{\sigma^{n-1}(x)} f_{\sigma^{n-2}(x)}\ldots f_{\sigma(x)}f_x$.
In this work consider random composition of $Y=\T=\R^2/\Z^2$ homeomorphisms (shortened as RCTH). We denote $M=\Sigma\times \mathbb T^2$, and we study only RCTHs where the action on the fibers $x\mapsto f_x$ takes values in the set of homeomorphisms homotopic to identity $\text{Hom}_0(\mathbb T^2)$, which, by a result of Epstein (\cite{XXXX}), is the same as the set of homeomorphisms of $\T$ isotopic to the identity.

Denote by $\pi:\R^2 \to \T$ to the covering map. A lift of an homeomorphism $f:\T\to\T$ to $\R^2$ is a map $\widetilde f:\R^2\to\R^2$ such that $\pi\circ\widetilde f= f\circ\pi$. It is well known that, if $f$ is a homeomorphisms of $\T$ that is isotopic to the identity,  any lift $\widetilde f$ to $\R^2$ of $f$ comutes with the translations $T_v(\widetilde p)=\widetilde p+v$ whenever $v\in\Z^2$. Furthermore, if $\widetilde f_1$ and $\widetilde f_2$ are two lifts of $f$, then their difference is constant and equal to a vector in $\Z^2$.
Whenever presented with a RCTH $F$, for every $x\in\Sigma$ we fix a lift $\widetilde f_x\colon \mathbb R^2\to \mathbb R^2$ such that the map $x\mapsto \widetilde f_x$ is still continuous. This induces a new cocycle $\widetilde F:\Sigma\times\R^2$ given by
$$\widetilde F(x,\widetilde p)=(\sigma(x), \widetilde f_x(\widetilde p)).$$

\begin{defn}
 We say that a cocycle $F\colon   M\to M$ is locally constant if the map $x\mapsto f_x$ depends only on the zeroth coordinate of $x$, which means that $f_x=f_{y}$ whenever $x_0=y_0$. It is customary, in this case, to abuse notation and denote $f_x$ and $\widetilde f_x$ by $f_{x_0}$ and $\widetilde f_{x_0}$. 
\end{defn}

Finally, a RTCH is said to be \emph{conservative} if for every $x$ in $\Sigma$, the nonwandering set  $\Omega(f_x)=\T$. It is said to be area-preserving if  there exists some borel probability measure positive on open sets and with full support that is invariant by $f_x$ for all $x\in\Sigma$.

\section{Rotation sets}
Given a RTCH and its associated cocyle of plane homeomorphisms, there are several rotational objects that can be used to study its dynamics. To do so, given $F$ and $\widetilde F$, define the displacement function $\rho_1(x,p):M\to\T$ as
$\rho_1(x,p)= \widetilde f_x(x,\widetilde p)-\widetilde p$, where $\widetilde p$ is a point in $\pi^{-1}(p)$. As $\widetilde f_x$ comutes with $\Z^2$ translations, the choice of $\widetilde p$ in $\pi^{-1}(p)$ does not change the value of $\rho_1$. There are two  main ways of understanding rotation for torus homeomorphisms, as time average displacements of orbits and as mean displacements of invariant measures.

\subsection{Time-average rotation vectors and rotation sets}
The time $n$ averaged displacement for a point $(x,p)$ is defined as
$$\rho_{n}(x,p)=\frac{1}{n}\sum_{i=0}^{n-1}\rho_1(\widetilde F^i(x,p))= \frac{\widetilde f^n _x(\widetilde p)-\widetilde p}{n}, \widetilde p\in\pi^{-1}(p).$$
Time averaged rotation vectors are then limits of vectors that are time $n_k$ averaged displacements for some increasing sequence $n_k$.   There are here at least $2$ different notions we want to differentiate, those of rotation sets for points in $M$ and the full Misiurewicz-Zieman rotation set.
 
We will say that a point $(x,p)$ \emph{has a rotation vector} if the limit $\lim_{n\to\infty}\rho_n(x,p)$ exists, however this sequence may not converge even when $\widetilde f_x$ is independent of $x$ (reducing the action on fibers to the classical case of a single homeomorphism). We define then $\rho(x,p, \widetilde F)$ the \emph{rotation set of the point $(x,p)$} as the accumulation points of the sequences of time averaged displacements of $(x,p)$, and we define $\rho_{\text{point}}(\widetilde F)$ the full \emph{point-wise rotation set} as the union of the rotation sets for points in $M$. More precisely:
\begin{defn}
	 $\rho(x,p,\widetilde F)\coloneq\{ \text{accumulation points of } (\rho_n(x, p))_{n\in\N}\},$ and $ \rho_{\text{point}}(\widetilde F)\coloneq\bigcup_{(x,p)\in M}\rho(x,p,\widetilde F).$
\end{defn}  

Second definition is inspired by the classical Misiurewiz-Zieman rotation set:
\begin{defn}
Denote
$$D_n\coloneq\{ \rho_n(x,p)| \, \, (x,p)\in M\},$$
we define the \emph{Misiurewicz-Ziemian rotation set of $\widetilde F$} as
$$\rho_{mz}(\widetilde F)\coloneq \limsup_{n\to\infty} D_n=\bigcap_{N\in\mathbb N}\overline{\bigcup_{n\geq N} D_n}. $$
\end{defn}	
We abuse notation and drop the dependence on $\widetilde F$ for rotation sets whenever the context allows for this. 
	
Of course, $\rho_{point}\subset \rho_{mz}$. There are easy examples where the inclusion is proper when dealing with the classical case of a single torus homeomorphism $g$, but the questions whether this inclusion is proper or not when the rotation set of $g$ has nonempty-interior, or if the non-wandering set of $g$ is the full torus are still open. This opens the following natural question:
\begin{pergunta}
Is there an example of conservative cocycle $F$ such that $\rho_{point}\not=\rho_{mz}$?
\end{pergunta}

\subsection{Rotation sets for measures}

We denote by $\mathcal{M}(\sigma)$ the set of $\sigma$-invariant probability measures in $\Sigma$, and by $\mathcal{M}(F)$ the set of $F$-invariant probability measures in $M$. For every $\mu$ in $\mathcal{M}(\sigma)$, let $\mathcal{M}_{\mu}(F)$ be the subset of $\mathcal{M}(F)$ of measures projecting to $\mu$, that is, the set of me\-a\-su\-res whose push-forward by the projection $P_1(x,p)=x$ is $\mu$. It is a classical result that since $M$ is compact then $\mathcal{M}_{\mu}(F)$ is nonempty, compact and convex for any $\mu$. Finally, let $\mathcal{M}_{\text{Per}}(\sigma)$ be the subset of the measures in $\mathcal{M}(\sigma)$ supported on periodic orbits of $\sigma$.

To follow we give the last notion of rotation sets involved.
\begin{defn}
For any $m$ in $\mathcal{M}(F)$, let 
$$\rho(\widetilde F,m)=\int \rho_1(x,p) dm$$
be the {\emph{rotation vector}} of the measure $m$.

For any $\mu$ in $\mathcal{M}(\sigma)$, we define its rotation set as
$$ \rho_{\mu}(\widetilde F)=\{ \rho(\widetilde F,m)\, | \, m\in \mathcal{M}_{\mu}(F) \},$$
If $\mathcal{M}_{\mu,erg}(F)$ is the subset of ergodic measures of $\mathcal{M}_{\mu}(M)$, we define
$$ \rho_{\mu,erg}(\widetilde F)=\{ \rho(\widetilde F,m) \,| \, m\in \mathcal{M}_{\mu, erg}(F) \}.$$

Finally, we define the full \emph{measure rotation set}, the \emph{ergodic rotation set}  and the \emph{rotation set of periodic words} as, respectively, 
\begin{equation*}\begin{aligned}
\rho_{mes}(\widetilde F) &=&\bigcup_{\mu\in \mathcal{M}(\sigma)}&\{\rho_{\mu}(\widetilde F)\}=&\bigcup_{m\in \mathcal{M}(F)}\{\rho(\widetilde F,m)\},\\
\rho_{erg}(\widetilde F)&=&\bigcup_{\tiny{
	\begin{aligned} m\in \mathcal{M}(F), \\ \mu \text{ is ergodic}\end{aligned}}}&\{\rho(\widetilde F,m)\},\\
\rho_{\text{Per}(\sigma)}(\widetilde F)&=&\bigcup_{\mu\in \mathcal{M}_{\text{Per}}(\sigma)}&\{\rho(\widetilde F,\mu)\}.\end{aligned}
\end{equation*}
\end{defn}
Here the dependence of $\widetilde F$ is dropped whenever the context is clear.

To continue, let us show two results on the connections between the different rotation sets.
\begin{prop}
$$\rho_{\text{point}}(\widetilde F)\subset\rho_{mz}(\widetilde F)\subset  \rho_{mes}(\widetilde F)$$	
\end{prop}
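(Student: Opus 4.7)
The first inclusion is essentially tautological. If $v \in \rho_{\text{point}}(\widetilde F)$, then there is some $(x,p)\in M$ and a subsequence $n_k\to\infty$ with $\rho_{n_k}(x,p)\to v$. Since $\rho_{n_k}(x,p)\in D_{n_k}$, for every $N$ the vector $v$ lies in $\overline{\bigcup_{n\geq N} D_n}$, and therefore in their intersection $\rho_{mz}(\widetilde F)$. The only mild observation needed is that the ``displacement'' map $\rho_1:M\to\R^2$ (despite the typographical ``$\T$'' in the text) is well-defined via $\widetilde f_x(\widetilde p)-\widetilde p$ by $\Z^2$-equivariance, is continuous, and by compactness of $M$ is uniformly bounded.

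For the second inclusion, the plan is a Krylov--Bogolyubov type argument. Given $v\in\rho_{mz}(\widetilde F)$, by definition of $\limsup$ I can pick sequences $n_k\to\infty$ and $(x_k,p_k)\in M$ with
\begin{equation*}
\rho_{n_k}(x_k,p_k)\longrightarrow v.
\end{equation*}
Form the empirical probability measures
\begin{equation*}
m_k \;=\; \frac{1}{n_k}\sum_{i=0}^{n_k-1}\delta_{F^i(x_k,p_k)}
\end{equation*}
on the compact space $M$, and extract a weak-$\ast$ accumulation point $m$. A standard computation, using that $F$ is continuous and $\|m_k - F_\ast m_k\|\leq 2/n_k$ in total variation, shows that $m\in\mathcal{M}(F)$.

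Now comes the key identification. A direct telescoping yields
\begin{equation*}
\int_M \rho_1\, dm_k \;=\; \frac{1}{n_k}\sum_{i=0}^{n_k-1}\rho_1\bigl(F^i(x_k,p_k)\bigr) \;=\; \rho_{n_k}(x_k,p_k),
\end{equation*}
because the sum collapses to $\widetilde f^{n_k}_{x_k}(\widetilde p_k)-\widetilde p_k$ divided by $n_k$. Since $\rho_1$ is continuous and bounded on $M$, weak-$\ast$ convergence gives $\int\rho_1\,dm_k\to\int\rho_1\,dm$, hence $\rho(\widetilde F,m)=v$. Writing $\mu = (P_1)_\ast m\in\mathcal{M}(\sigma)$, we conclude $m\in\mathcal{M}_\mu(F)$ and therefore $v\in\rho_\mu(\widetilde F)\subset\rho_{mes}(\widetilde F)$.

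I do not expect a serious obstacle here; the only subtlety worth flagging is the verification that $\rho_1$ is a genuine continuous $\R^2$-valued function on $M$ (so that the weak-$\ast$ limit passes through the integral), and the observation that the telescoping identity works uniformly in the base point because we chose the lifts $\widetilde f_x$ to depend continuously on $x$.
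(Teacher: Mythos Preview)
Your proof is correct and follows essentially the same approach as the paper: the first inclusion is dismissed as immediate, and the second is obtained by forming the empirical measures $m_k=\frac{1}{n_k}\sum_{i=0}^{n_k-1}\delta_{F^i(x_k,p_k)}$, passing to a weak-$\ast$ limit, and invoking the Krylov--Bogolyubov argument together with the identity $\int\rho_1\,dm_k=\rho_{n_k}(x_k,p_k)$. Your version is slightly more explicit about why $\rho_1$ is continuous and bounded and about the projection to $\mu\in\mathcal{M}(\sigma)$, but there is no substantive difference.
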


\begin{proof}
While the first inclusion is obvious, the other one comes from the observation that $\rho_n(x,p)=\frac{1}{n}\sum_{i=0}^{n-1}\rho_1(\widetilde F^i(x,p))$. If $v\in\rho_{mz}(\widetilde F)$, then there exists a sequence $(x_k, p_k)_{k\in\N}$ of points in $M$ and a sequence of increasing integers $n_k$ such that $\rho_{n_k}(x_k,p_k)$ converges to $v$. Taking the probability measure $m_k=\frac{1}{n_k}\sum_{i=^0}^{n_k-1}\delta_{F^i(x_k, p_k)}$ which does not need to be invariant, it is possible to observe that $\rho_{n_k}(x_k,p_k)=\int \rho_1 dm_k$. Because of the weak-* compactness of $\mathcal{M}(M)$ there must exists a weak-* limit $m$ of some subsequence $m_{k_l}$. It is a classical Krylov-Boguliov argument that $m$ is $F$-invariant and of course $\rho(\widetilde F,m)=v$.	
\end{proof}
	
\begin{prop}
$\rho_{{}_{\text{Per}(\sigma)}}(\widetilde F)$ is a subset of $\rho_{mz}(\widetilde F).$
\end{prop}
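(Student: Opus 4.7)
My plan is to reduce the problem to the classical Misiurewicz--Ziemian theorem applied to a single torus homeomorphism. Let $\mu\in\mathcal{M}_{\text{Per}}(\sigma)$; then $\mu$ is supported on a periodic $\sigma$-orbit, say on $\{x_0,\sigma(x_0),\dots,\sigma^{k-1}(x_0)\}$, so $\mu=\tfrac{1}{k}\sum_{i=0}^{k-1}\delta_{\sigma^i(x_0)}$. Fix any $m\in\mathcal{M}_\mu(F)$ and disintegrate along the fibers to write $m=\tfrac{1}{k}\sum_{i=0}^{k-1}\delta_{\sigma^i(x_0)}\otimes m_i$, where each $m_i$ is a Borel probability on $\T$. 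The $F$-invariance of $m$ forces $m_{i+1}=(f_{\sigma^i(x_0)})_* m_i$ (indices mod $k$), so that $m_0$ is invariant under the single torus homeomorphism $g\coloneq f^k_{x_0}\in\Hom$, whose canonical lift is $\widetilde g\coloneq \widetilde f^k_{x_0}$.

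Next I would compute the rotation vector of $m$ and identify it with an average of displacements of $\widetilde g$. Using the disintegration,
\begin{equation*}
\rho(\widetilde F,m)=\int \rho_1\,dm=\frac{1}{k}\sum_{i=0}^{k-1}\int \left(\widetilde f_{\sigma^i(x_0)}(\widetilde p)-\widetilde p\right)\,dm_i(p).
\end{equation*}
Since $m_i=(f^i_{x_0})_* m_0$, a change of variables telescopes the sum to yield
\begin{equation*}
\rho(\widetilde F,m)=\frac{1}{k}\int \left(\widetilde f^k_{x_0}(\widetilde p)-\widetilde p\right)\,dm_0(p)=\frac{v}{k},
\end{equation*}
where $v\coloneq \int(\widetilde g(\widetilde p)-\widetilde p)\,dm_0(p)$ is the mean displacement of $m_0$ under the single map $g$.

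The final step is to invoke the classical theorem of Misiurewicz--Ziemian for $g$: the rotation vector of any $g$-invariant measure lies in the (compact and convex) set $\rho(\widetilde g)$, so $v\in\rho(\widetilde g)$. By the definition of $\rho(\widetilde g)$ there exist points $\widetilde p_j\in\R^2$ and integers $n_j\to\infty$ with $(\widetilde g^{n_j}(\widetilde p_j)-\widetilde p_j)/n_j\to v$. Setting $p_j=\pi(\widetilde p_j)$ and unwrapping $\widetilde g^{n_j}=\widetilde f^{kn_j}_{x_0}$, this is exactly $\rho_{kn_j}(x_0,p_j)\to v/k=\rho(\widetilde F,m)$, showing that $\rho(\widetilde F,m)\in\rho_{mz}(\widetilde F)$.

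There is no real obstacle, but one subtlety is worth flagging: since $\rho_{mz}(\widetilde F)$ is not known to be convex in this random setting, one cannot naively ergodic-decompose $m$ over $\mathcal{M}_\mu(F)$ and average. The argument therefore routes through the single homeomorphism $g=f^k_{x_0}$, whose classical rotation set $\rho(\widetilde g)$ \emph{is} convex, so that $v\in\rho(\widetilde g)$ without having to assume ergodicity of $m_0$.
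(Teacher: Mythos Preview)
Your proof is correct and follows essentially the same approach as the paper: reduce to the single homeomorphism $g=f^k_{x_0}$ with lift $\widetilde g=\widetilde f^k_{x_0}$, identify $\rho(\widetilde F,m)$ with $\tfrac{1}{k}\int(\widetilde g-\mathrm{id})\,dm_0$ for a $g$-invariant $m_0$, invoke the classical Misiurewicz--Ziemian identity $\rho_{mes}(\widetilde g)=\rho(\widetilde g)$, and then unwrap $\widetilde g^{n_j}=\widetilde f^{kn_j}_{x_0}$ to land in $\rho_{mz}(\widetilde F)$. Your disintegration-and-telescoping computation is in fact a bit cleaner than the paper's, which builds the correspondence in the opposite direction (from $\mu\in\mathcal M(g)$ to an $F$-invariant $m$), but the substance is the same.
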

\begin{proof}
	
	Let $x\in \Sigma$ and $n$ be a positive integer such that $\sigma^n(x)=x$. Define $g=f^n_x$ and let $\widetilde g=\widetilde f^n_x$ be its lift to $\mathbb{R}^2$.
	
	First we prove that $\rho(\widetilde g)/n \subset \rho_{mz}(\widetilde F)$. Let $v\in\rho(\widetilde g)$, there exists a sequence of points $(\widetilde p_k)_{k\in\N}\subset \R^2$ and a sequence of integers $j_k$ such that
	$$\lim_{k\to\infty}\frac{1}{j_k}\left(\widetilde g^{j_k}(\widetilde p_k)-\widetilde p_k\right)=v.$$
	Observe that $\widetilde g^{j_k} = \widetilde f_x^{n j_k}$. Therefore, this implies
	$$\lim_{k\to\infty}\frac{1}{n j_k}\left(\widetilde f_x^{n j_k}(\widetilde p_k)-\widetilde p_k\right)=\frac{v}{n},$$
	which shows that $v/n \in \rho_{mz}(\widetilde F)$, and hence $v \in n\rho_{mz}(\widetilde F)$.
	
	If $\mathcal{M}(g)$ is the set of all $g$-invariant Borel probability measures, then
	$$\rho(\widetilde g)=\left\{\int \rho_n(x,p)  d\mu(p) \ : \ \mu\in \mathcal{M}(g)\right\},$$
	then it remains to show that for any $F$-invariant measure $m$ whose projection onto $\Sigma$ is supported on the orbit of $x$, there exists a corresponding $\mu$ in $\mathcal{M}(g)$ such that
	$$\int \rho_1(y,p)  dm(y,p)=\frac{1}{n}\int \rho_n(x,p)  d\mu(p),$$
	proving that $\rho_{Per(\sigma)}(\widetilde{F})=\rho(\widetilde g)/n$. Indeed, for any $\mu\in\mathcal M(g)$, the measure $m$ defined by
	$$m=\frac{1}{n}\sum_{i=0}^{n-1}\delta_{\sigma^{i}(x)}\times (f^{-i}_x)_*(\mu)$$
	projects to the measure supported on the orbit of $x$ and is $F$-invariant. Moreover, we compute
	\begin{align*}
		\int \rho_1(y,p)  dm(y,p) &=\frac{1}{n}\sum_{i=0}^{n-1}\int \rho_1(y,p)  d\left(\delta_{\sigma^{i}(x)}\times (f^{-i}_x)_*(\mu)\right)(y,p) \\
		&=\frac{1}{n}\sum_{i=0}^{n-1}\int_{\mathbb{T}^2} \rho_1(\sigma^{i}(x), p)  d\left((f^{-i}_x)_*(\mu)\right)(p) \\
		&=\frac{1}{n}\sum_{i=0}^{n-1}\int_{\mathbb{T}^2} \rho_1(\sigma^{i}(x), f^{i}_x(q))  d\mu(q) \\
		&=\frac{1}{n}\int_{\mathbb{T}^2} \left( \sum_{i=0}^{n-1} \rho_1(\sigma^{i}(x), f^{i}_x(q)) \right) d\mu(q) \\
		&=\frac{1}{n}\int_{\mathbb{T}^2} \rho_n(x,q)  d\mu(q).
	\end{align*}
	This completes the verification.
	
\end{proof}

Here we pose two different questions, one in the spirit of the traditional study of rotation theory, and another interesting one mimicking other in the study of cocycles is how well do periodic words capture the rotational behavior.
\begin{question}
Given a RCTH, is $\rho_{mz}=\rho_{mes}$?
\end{question}
\begin{question}
Given a locally constant RCTH, is $\overline{\rho_{{}_{\text{Per}(\sigma)}}}$ equal to $\rho_{mz}$?
\end{question}

\section{Continuity properties}

A natural question, which mirrors a well-studied problem concerning Lyapunov exponents, is whether the rotation sets for shift-invariant probability measures on $\Sigma$ vary continuously. The answer, of course, may depend on the chosen notion of convergence for the space of measures. As in the case of Lyapunov exponents, continuity is not guaranteed if one considers only weak-* convergence, as the following example demonstrates:

\begin{example}
Let $F$ be a locally constant RCTH defined on $\{0,1\}^{\Z}\times\T$, where $\widetilde f_0(p_1,p_2)=(p_1+ sin(2\pi p_2), p_2)$ and $\widetilde f_1(p_1,p_2)=(p_1, p_2+\sqrt{2})$. Let $\mu_0$ be the measure supported on the null sequence $x^0$ where $(x^0)_i=0, i\in\Z$ and let $\mu_k$ is the invariant-measure supported on the periodic orbit of the $k$-periodic point $x^k$, where $({x^k})_{i}=0, 0\le i<k-1$ and $({x^k})_{k-1}=1$. Then $\rho_{\mu_0}(\widetilde F)=[-1,1]\times\{0\}$ while $\rho_{\mu_k}(\widetilde F)=\{(0,\sqrt{2}/k)\}$.
\end{example}
\begin{proof}
	
	We only sketch the main argument here. Since the standard definition of the Misiurewicz-Ziemian rotation set for a homeomorphism coincides with the measure rotation set, we have $\rho_{\mu_0}(\widetilde F) = \rho(\widetilde{f}_0)$. This follows because any measure $m \in \mathcal{M}_{\mu_0}( F)$ can be decomposed as a product measure $\delta_{x^0} \times \nu$, where $\nu$ is a measure on $\mathbb{T}^2$ invariant under $f_0$.
	
	On the other hand, if $(y, q)$ is a point in the support of an ergodic measure $m \in \mathcal{M}_{\mu_k, \text{erg}}( F)$, then there exists $0 \le i < k$ such that $\sigma^i(y) = x^k$. By Birkhoff's Ergodic Theorem, $$\rho_m(\widetilde F) = \lim_{n \to \infty} \rho_n(y, q) = \lim_{n \to \infty} \rho_n(\widetilde F^i(y, q)),$$thus we may assume without loss of generality that $y = x^k$.
	
	Taking $n = lk$, we observe that
	$$
	\rho_n(x^k, q) = \frac{1}{n} \sum_{j=0}^{l-1} \rho_k(\widetilde F^j(x^k, q)) = \frac{1}{n}\left((k-1) \sum_{j=0}^{l-1}  \sin(2\pi (q_2 + j\sqrt{2})), l\sqrt{2} \right).
	$$
	Since
	$$
	\lim_{l\to\infty}\frac{1}{l} \sum_{j=0}^{l-1} \sin(2\pi (q_2 + j\sqrt{2})) = \int_0^1 \sin(2\pi t)  dt = 0,
	$$
	the result follows.
	
\end{proof}

On the other hand, while one cannot expect continuity, at least semi-continuity holds

\begin{prop}
Let $(\mu_k)_{k\in\N}$ be a sequence in $\mathcal{M}(\sigma)$ converging in the weak-* topology to a measure $\mu$, and let $(v_k)_{k\in\N}$ be a sequence with $v_k\in\rho_{\mu_k}(\widetilde F)$ converging to $v$, then $v\in\rho_{\mu}(\widetilde F)$.
\end{prop}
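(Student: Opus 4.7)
The plan is a direct compactness-plus-continuity argument, exploiting that $\rho_1$ is a continuous bounded $\R^2$-valued function on the compact space $M$. (Although the text writes $\rho_1\colon M\to\T$, the map $\widetilde p\mapsto \widetilde f_x(\widetilde p)-\widetilde p$ is $\Z^2$-periodic in $\widetilde p$ because $\widetilde f_x$ commutes with integer translations, and depends continuously on $x$ by the choice of lift; hence $\rho_1$ descends to a continuous function $M\to\R^2$, bounded thanks to compactness of $M$.)

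First, for each $k$ I unwrap the definition of $\rho_{\mu_k}(\widetilde F)$ to pick $m_k\in\mathcal{M}_{\mu_k}(F)$ with $\int \rho_1\,dm_k = v_k$. Since $M$ is compact, $\mathcal{M}(M)$ is weak-* compact and I can extract a subsequence $m_{k_j}$ converging weak-* to some $m\in\mathcal{M}(M)$. The rest of the proof is verifying three properties of $m$: (a) that $m$ is $F$-invariant, (b) that $(P_1)_*m=\mu$, and (c) that $\int\rho_1\,dm=v$.

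For (a), since $F\colon M\to M$ is continuous, the set $\mathcal{M}(F)$ of $F$-invariant probabilities is weak-* closed (test against $\varphi\circ F-\varphi$ for continuous $\varphi$), and each $m_{k_j}$ lies in this set, so $m\in\mathcal{M}(F)$. For (b), continuity of $P_1$ implies $(P_1)_*m_{k_j}\to (P_1)_*m$ in the weak-* topology; but by construction $(P_1)_*m_{k_j}=\mu_{k_j}\to\mu$, so $(P_1)_*m=\mu$, placing $m$ in $\mathcal{M}_\mu(F)$. For (c), since $\rho_1$ is continuous and bounded on $M$, weak-* convergence gives
\[
v = \lim_{j\to\infty} v_{k_j} = \lim_{j\to\infty}\int \rho_1\,dm_{k_j} = \int\rho_1\,dm = \rho(\widetilde F,m),
\]
so $v\in\rho_\mu(\widetilde F)$, as required.

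There is no real obstacle: the argument is the standard upper semicontinuity of the set-valued map $\mu\mapsto\mathcal{M}_\mu(F)$ composed with the continuous functional $m\mapsto\int\rho_1\,dm$. The only point worth flagging in the write-up is the $\R^2$-valuedness of $\rho_1$ discussed above, since without it assertion (c) would need to be rephrased modulo $\Z^2$ and would no longer yield the honest equality $\int\rho_1\,dm=v$ used to identify the limit rotation vector.
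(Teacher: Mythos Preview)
Your proof is correct and follows essentially the same approach as the paper's: choose representing measures $m_k\in\mathcal{M}_{\mu_k}(F)$, extract a weak-$*$ convergent subsequence, and verify that the limit measure lies in $\mathcal{M}_\mu(F)$ and has rotation vector $v$. Your write-up is slightly more explicit than the paper's---you spell out the closedness of $\mathcal{M}(F)$ and the continuity of the push-forward by $P_1$, and you correctly flag that $\rho_1$ must be regarded as $\R^2$-valued (not $\T$-valued) for the integral argument to go through---but the underlying idea is identical.
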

\begin{proof}
Since $v_k \in \rho_{\mu_k}(\widetilde F)$, for each $k$ there exists an $F$-invariant probability measure $m_k$ projecting to $\mu_k$ such that $\rho_{m_k}(\widetilde F) = v_k$. By the compactness of the space of probability measures in the weak-$^*$ topology, there exists a subsequence $(m_{k_l})_{l \in \mathbb{N}}$ converging to some measure $m$.

The weak-$*$ convergence of $m_{k_l}$ to $m$, together with the fact that each $m_{k_l}$ projects to $\mu_{k_l}$ and $\mu_{k_l}$ converges to $\mu$, implies that $m$ projects to $\mu$. Moreover, since the rotation vector depends continuously on the measure in this topology, we have:
\[
\rho_m(\widetilde F) = \int \rho_1  dm = \lim_{l \to \infty} \int \rho_1  dm_{k_l} = \lim_{l \to \infty} v_{k_l} = v.
\]
Hence, $v \in \rho_\mu(\widetilde F)$.
\end{proof}

The preceding discussion naturally leads to the following question:

\begin{question}
	Suppose $(\mu_k)_{k \in \mathbb{N}}$ is a sequence in $\mathcal{M}(\sigma)$ converging to $\mu$ in the weak-$*$ topology. If, in addition, the supports of the measures $\mu_k$ converge to the support of $\mu$ in the Hausdorff topology, does it follow that the associated rotation sets converge in the Hausdorff topology as well? That is, does
	\[
	\mu_k \overset{\text{weak-}*}{\longrightarrow} \mu \quad \text{and} \quad \operatorname{supp}(\mu_k) \overset{H}{\longrightarrow} \operatorname{supp}(\mu)
	\]
	imply
	\[
	\rho_{\mu_k}(\widetilde F) \overset{H}{\longrightarrow} \rho_{\mu}(\widetilde F) \quad ?
	\]
\end{question}

\section{Essential and Inessential points}\label{section:essentialpoints}

In this section, we assume that $\Sigma = \{0,1\}^{\mathbb{Z}}$ and that the homeomorphism $f_x$ takes values in the set $\{f_0, f_1\}$, where $f_0$ and $f_1$ are area-preserving homeomorphisms of the torus $\mathbb{T}^2$, each homotopic to the identity.

\begin{defn}
	An open subset $U \subset \mathbb{T}^2$ is called inessential if every loop contained in $U$ is homotopically trivial in $\mathbb{T}^2$; otherwise, $U$ is called essential. A general set $E \subset \mathbb{T}^2$ is inessential if it is contained in some inessential open set. A set $E$ is fully essential if its complement $\mathbb{T}^2 \setminus E$ is inessential.
	
	A point $p \in \mathbb{T}^2$ is called inessential for the cocycle $F$ if there exists a neighborhood $B$ of $p$ such that the set
	\begin{equation}\label{up}
U_B = \bigcup_{x \in \Sigma} \bigcup_{n \in \mathbb{N}} f_x^n(B)
	\end{equation}
	is inessential. If $p$ is not inessential for $F$, it is said to be essential for $F$.
\end{defn}

Here we denote by $\text{Ess}(F)$ and $\text{Ine}(F)$ the sets of essential and inessential points of $F$ respectively. 

\begin{lemma}
	The set $\text{Ine}(F)$ is $f_i$-invariant for every $i\in\{0,1\}$.
\end{lemma}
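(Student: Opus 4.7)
The strategy will be to prove both inclusions $f_i(\text{Ine}(F)) \subseteq \text{Ine}(F)$ and $f_i^{-1}(\text{Ine}(F)) \subseteq \text{Ine}(F)$, which together yield full $f_i$-invariance. The forward inclusion will be a direct consequence of the cocycle's semigroup structure on $\{f_0,f_1\}$, while the backward inclusion will genuinely use the area-preserving hypothesis.

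For the forward inclusion, I will take $p \in \text{Ine}(F)$ with an open neighborhood $B$ satisfying $U_B$ inessential, and propose $B' = f_i(B)$ as the candidate open neighborhood of $f_i(p)$. The key step is verifying $U_{B'} \subseteq U_B$. Given any $y \in \Sigma$ and $n \geq 0$, let $z \in \Sigma$ be any sequence with $z_0 = i$ and $z_{k+1} = y_k$ for $k \geq 0$. Then
\[
f_y^n(f_i(B)) = f_{y_{n-1}} \circ \cdots \circ f_{y_0} \circ f_i(B) = f_z^{n+1}(B) \subseteq U_B,
\]
and taking the union over $y,n$ gives $U_{B'} \subseteq U_B$. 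Since $U_B$ is contained in an inessential open set, so is $U_{B'}$, and therefore $f_i(p) \in \text{Ine}(F)$.

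For the reverse inclusion I first note that $\text{Ine}(F)$ is open: if $B$ is a witness for $p$, then $B$ is a witness for every $q \in B$. Hence the forward inclusion just established shows $\text{Ine}(F)$ is an open, $f_i$-forward-invariant subset of $\T^2$. Using the hypothesis that $f_i$ preserves a probability measure $\nu$ of full support, I would then argue $f_i(\text{Ine}(F)) = \text{Ine}(F)$. Indeed $\nu(\text{Ine}(F) \setminus f_i(\text{Ine}(F))) = 0$, and the interior of this difference, being $\text{Ine}(F) \setminus \overline{f_i(\text{Ine}(F))}$, is open of measure zero, hence empty by full support. So the difference is contained in $\partial f_i(\text{Ine}(F))$; a Poincaré recurrence argument applied to $f_i$ then rules out residual boundary points inside $\text{Ine}(F)$, yielding equality and therefore $f_i^{-1}(\text{Ine}(F)) \subseteq \text{Ine}(F)$.

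The main obstacle is the last step: converting the measure-theoretic equality $\nu(f_i(\text{Ine}(F)))=\nu(\text{Ine}(F))$ into a set-theoretic equality. A naive measure argument only controls the interior of the difference; one must then exploit the topological structure of $\text{Ine}(F)$ together with the recurrence consequences of area-preservation to eliminate boundary points. Without area-preservation, $\text{Ine}(F)$ need only be forward-invariant, so this is precisely where the standing hypothesis on $f_0,f_1$ is used.
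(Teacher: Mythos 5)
Your forward inclusion is correct and is essentially the paper's own argument carried out carefully: the paper asserts the identity $U_{f_i(B)}=f_i(U_B)$, which is not literally true (the left--hand side is the union of $w(f_i(B))$ over all words $w$ in $f_0,f_1$, i.e.\ the words in which $f_i$ is applied \emph{first}, while the right--hand side corresponds to the words in which $f_i$ is applied \emph{last}), but both sides are contained in $U_B$, and your verification that $f_y^n(f_i(B))=f_z^{n+1}(B)\subseteq U_B$ is exactly the correct form of this step, giving $f_i(\text{Ine}(F))\subseteq\text{Ine}(F)$.

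The gap is in the backward inclusion. The principle you invoke --- that an open, forward--invariant set of an area-preserving homeomorphism, whose difference with its image has measure zero and empty interior, must be fully invariant by some recurrence argument --- is false at this level of generality. Concretely, let $g$ be area preserving with a fixed point $z$ and a point $q\neq z$ whose backward orbit converges to $z$ (e.g.\ $q$ on the unstable branch of a hyperbolic fixed point of a conservative map), set $K=\{z\}\cup\{g^{-n}(q):n\ge 0\}$ and $O=\mathbb{T}^2\setminus K$. Then $O$ is open, $g(O)=O\setminus\{g(q)\}\subsetneq O$, the residual set is a single boundary point of $g(O)$, and every point is nonwandering for $g$; so measure preservation plus Poincar\'e recurrence cannot ``rule out residual boundary points.'' To close the argument you would have to use the specific structure of $\text{Ine}(F)$, and this is genuinely nontrivial: for $p$ with $f_i(p)\in\text{Ine}(F)$ witnessed by $B'$, the natural candidate $B=f_i^{-1}(B')$ has $U_B$ containing the uncontrolled terms $w\bigl(f_{1-i}(f_i^{-1}(B'))\bigr)$, which need not lie in $U_{B'}$. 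In fairness, the paper's one-line proof of the ``only if'' direction rests on the same unproved identity and is equally incomplete, and only forward invariance of the relevant sets is used downstream (full invariance is recovered there for regularized fillings, in Lemma~\ref{lemma:primefilled}, by an area-counting argument on connected components --- which is the type of argument one would need here too, rather than recurrence alone).
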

\begin{proof}
Note that for every $p \in \text{Ine}(F)$, if $B$ is a neighborhood of $p$ such that $U_B$ is inessential, then for each $i \in \{0,1\}$, the set $f_i(B)$ is a neighborhood of $f_i(p)$, and
\[
U_{f_i(B)} = f_i(U_B)
\]
is also inessential. Consequently, $p$ is inessential for $F$ if and only if $f_i(p)$ is inessential for $F$.
\end{proof}

\begin{defn}
If $O$ is an open subset of $\mathbb{T}^2$, then its filling, denoted $\text{Fill}(O)$, is defined as the union of $O$ with all inessential connected components of its complement.

The regularized filling of $O$, denoted $\text{Fill}^\prime(O)$, is the set $\pi(A)$, where $A$ is the union of the interiors of the closures of all connected components $\widetilde{W}$ of $\pi^{-1}(\text{Fill}(O))$. Equivalently, $A$ consists of all points $\widetilde{p} \in \mathbb{R}^2$ for which there exists a connected component of $\pi^{-1}(\text{Fill}(O))$ that is dense neighborhood of $\widetilde{p}$.
\end{defn}

The following properties are direct consequences of the definitions. If $O$ is an inessential set, then both its filling $\text{Fill}(O)$ and its regularized filling $\text{Fill}'(O)$ are also inessential. Furthermore, if a homeomorphism $g$ leaves $O$ forward invariant, then both $\text{Fill}(O)$ and $\text{Fill}'(O)$ are forward invariant under $g$ as well. The next lemma establishes a stronger invariance property in the area-preserving case.

\begin{lemma}\label{lemma:primefilled}
	If $g$ is an area-preserving homeomorphism and $O$ is a forward invariant inessential open set, then the regularized filling $\text{Fill}'(O)$ is invariant under $g$.
\end{lemma}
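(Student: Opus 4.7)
The plan is to upgrade the forward inclusion $g(\text{Fill}'(O)) \subseteq \text{Fill}'(O)$, which is already recorded just before the lemma, to an equality using the area-preservation hypothesis. The strategy is to show that $g$ permutes bijectively the building blocks $\text{int}(\overline{\widetilde W})$ that define $\text{Fill}'(O)$; the key step is to understand how $g$ acts on the connected components of $\text{Fill}(O)$.

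First, I would analyze the induced map $\bar G$ on the set of connected components of $\text{Fill}(O)$, defined by letting $\bar G(V)$ be the unique component containing the connected set $g(V)$, and claim that $\bar G$ is a bijection. Since $g$ is injective, the images $g(V)$ are pairwise disjoint, and since $m(\text{Fill}(O)\setminus g(\text{Fill}(O))) = 0$ by area preservation together with forward invariance, one obtains, for each component $V'$,
\[
m(V') = \sum_{V : \bar G(V) = V'} m(g(V)) = \sum_{V : \bar G(V) = V'} m(V).
\]
Positivity of $m$ on nonempty open sets then gives surjectivity, as every $V'$ must have at least one preimage. For injectivity, if some $V'_0$ had two distinct preimages, I would follow the forward chain $V'_i := \bar G^i(V'_0)$: the identity above yields $m(V'_i) \leq m(V'_{i+1})$, strict at any step where multiple preimages occur. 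Only finitely many components of $\text{Fill}(O)$ can have measure above a fixed positive threshold (since the total measure is at most one), so the chain of distinct components is finite and must eventually cycle; along the cycle all measures are equal, and the identity above then forces each image in the cycle to have a unique preimage, contradicting the hypothesis on $V'_0$.

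Once $\bar G$ is known to be a bijection, a short second step gives density: for each pair $(V, V' = \bar G(V))$, area preservation yields $m(g(V)) = m(V) = m(V')$, so the open set $V' \setminus g(V)$ has zero measure; since $m$ is positive on nonempty open sets this set has empty interior, and because $g(V)$ is open, $g(V)$ is dense in $V'$, whence $\overline{g(V)} = \overline{V'}$.

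Finally I would fix a lift $\widetilde g$ of $g$. By $\Z^2$-equivariance of $\widetilde g$ and the bijectivity of $\bar G$, $\widetilde g$ induces a bijection on the set of connected components $\widetilde W$ of $\pi^{-1}(\text{Fill}(O))$, say $\widetilde W \mapsto \widetilde W'$, and the density obtained in the previous step lifts through the local homeomorphism $\pi$ to the fact that $\widetilde g(\widetilde W)$ is dense in $\widetilde W'$. Because $\widetilde g$ is a homeomorphism it commutes with closure and interior, so
\[
\widetilde g(\text{int}(\overline{\widetilde W})) = \text{int}(\overline{\widetilde g(\widetilde W)}) = \text{int}(\overline{\widetilde W'}),
\]
and hence $\widetilde g$ permutes bijectively the family $\{\text{int}(\overline{\widetilde W})\}$. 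Thus $\widetilde g(A) = A$ in the notation of the definition, and projecting yields $g(\text{Fill}'(O)) = \text{Fill}'(O)$. The main obstacle is the injectivity part of the bijectivity argument, which rests on the cycling argument together with the observation that only finitely many components of $\text{Fill}(O)$ can have large measure; the remaining steps are then direct consequences of area preservation and the commutation of homeomorphisms with closure and interior.
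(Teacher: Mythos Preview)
Your proof is correct and rests on the same core idea as the paper's: area preservation forces $g$ to permute the relevant connected components, upgrading forward invariance to full invariance. The organization differs slightly. The paper works directly with the connected components of $\text{Fill}'(O)$ and stratifies them by area: there are finitely many components of maximal area $a_1$, and since $g$ sends each such component densely into another of the same area, it must permute them; one then descends inductively through the (countable, decreasing) list of area values. You instead work first with the components of $\text{Fill}(O)$, establish that the induced map $\bar G$ is a bijection via a surjectivity-plus-cycling argument, and only then pass to $\text{Fill}'(O)$ by lifting and applying $\text{int}(\overline{\,\cdot\,})$. Your route is a bit longer, but it has the advantage of making the passage from $\text{Fill}(O)$ to $\text{Fill}'(O)$ explicit, whereas the paper's argument implicitly uses that the components of $\text{Fill}'(O)$ inherit the needed disjointness and area properties. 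One small remark on your injectivity step: as written, the contradiction is phrased as landing back on $V'_0$, but $V'_0$ need not itself lie on the eventual cycle; the clean way to close the argument is to note that the \emph{entry point} of the cycle would then have two distinct preimages (one from the pre-period and one from within the cycle), which already contradicts the unique-preimage property you derived for cycle elements.
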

\begin{proof}

We already know that $\text{Fill}'(O)$ is forward invariant. To prove full invariance, first observe that the area of any connected component of $\text{Fill}'(O)$ must belong to a countable set of positive values. Let $\{a_1, a_2, a_3, \dots\}$ be a strictly decreasing sequence of these possible areas, converging to zero.

Now, consider a connected component $W$ of $\text{Fill}'(O)$ with area $a_1$. Since $g$ is area-preserving, the image $g(W)$ is contained in some connected component $W'$ of $\text{Fill}'(O)$ with the same area, $a_1$. Moreover, since $\text{Fill}'(O)$ is forward invariant and $g$ is a homeomorphism, $g$ maps $W$ densely into $W'$. Because there are only finitely many components of area $a_1$, $g$ must permute these components.

A similar argument, applied inductively to the components of area $a_j$ for each $j$, shows that $g$ permutes all connected components of $\text{Fill}'(O)$. Therefore, $\text{Fill}'(O)$ is invariant under $g$.
\end{proof}

If $U$ is an open topological disk in $\mathbb T^2$, $\mathcal{D}(U)$ is the diameter of any connected component of $\widetilde U$. 

We recall here the main result of \cite{tripleboundary} (see also \cite{inventiones1})
\begin{theo}\label{tripleboundary}
If $g$ is an area preserving homeomorphism of $\T$ isotopic to the identity such that $\text{Fix}(g)$ is inessential then there exists $M>0$ such that if  $U$ is an open topological disk and $g(U)=U$, then $\mathcal{D}(U)<M$.
\end{theo}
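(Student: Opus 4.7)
The plan is to argue by contradiction. Suppose there is a sequence of open topological disks $U_n \subset \T$ with $g(U_n)=U_n$ and $\mathcal{D}(U_n) \to \infty$. For each $n$, choose a connected component $\widetilde U_n$ of $\pi^{-1}(U_n)$. Since $U_n$ is simply connected in $\T$, every loop in $U_n$ is null-homotopic in $\T$ and hence lifts to a closed loop, so $\pi|_{\widetilde U_n}$ is a homeomorphism onto $U_n$. In particular $\widetilde U_n$ is an open topological disk in $\R^2$ of area at most $1$ but of diameter going to infinity. Because $g(U_n)=U_n$, by adjusting a lift $\widetilde g$ by a suitable integer translation I obtain a lift $\widetilde g_n$ of $g$ with $\widetilde g_n(\widetilde U_n)=\widetilde U_n$.

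The central tool I would use is Carath\'eodory's prime ends compactification $\widehat{\widetilde U_n}$ of the simply connected planar domain $\widetilde U_n$. The restriction $\widetilde g_n|_{\widetilde U_n}$ extends to an orientation-preserving homeomorphism of $\widehat{\widetilde U_n}$, and its action on the prime ends circle has a well-defined rotation number $\alpha_n$. Area preservation, together with Poincar\'e recurrence inside the bounded invariant domain $\widetilde U_n$, rules out nontrivial wandering behavior on the prime ends circle, and combined with the Brouwer plane translation theorem it forces the existence of fixed points of $\widetilde g_n$ in $\overline{\widetilde U_n}$.

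The next step would be a case analysis on $\alpha_n$. When $\alpha_n$ is rational (the case area preservation drives us toward), there are periodic prime ends, which are accessible from the interior by prime chains; by a Cartwright--Littlewood style argument together with area preservation, these accessible periodic prime ends produce genuine fixed points of a power of $g$ on the topological boundary of $U_n$. I would then exploit the growing diameter of $\widetilde U_n$: since its area is bounded but its diameter tends to infinity, the projection $U_n$ must ``wind around'' $\T$ in a controlled fashion, and the accumulation of fixed points along the boundary as $n$ grows would yield a loop inside $\operatorname{Fix}(g)$ that is homotopically nontrivial in $\T$, contradicting the hypothesis that $\operatorname{Fix}(g)$ is inessential. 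The irrational case for $\alpha_n$ would be excluded using area preservation of the extension together with the fact that irrational prime-ends rotation would create a Denjoy-style invariant Cantor set inconsistent with recurrence forced by Poincar\'e's theorem.

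The main obstacle is the ``triple boundary'' step, i.e.\ constructing the essential loop in $\operatorname{Fix}(g)$ from the local boundary data. Concretely, one inspects three distinct accumulation points of $\partial \widetilde U_n$ in $\R^2$ that sit in different $\Z^2$-translates of a neighborhood of a prime-ends fixed point; the combinatorics of how the prime-ends extension of $\widetilde g_n$ permutes these three boundary regions, together with the fact that $\widetilde g_n$ commutes with $\Z^2$-translations, forces a non-trivial deck-transformation relation that packages into a homologically nontrivial cycle of fixed points after projection. Controlling this ``packaging'' rigorously, and relating the prime-ends data, which lives on a single lift, back to the global topology of $\operatorname{Fix}(g) \subset \T$, is the technical heart of the proof and the step where the area-preserving hypothesis is used most sharply (in particular via Lemma~\ref{lemma:primefilled} applied to the filled invariant disk).
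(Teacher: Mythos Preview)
This theorem is not proved in the present paper at all: it is explicitly introduced with the phrase ``We recall here the main result of \cite{tripleboundary} (see also \cite{inventiones1})'' and is used as a black box in the proof of Lemma~\ref{lema9}. There is therefore no proof in the paper to compare your proposal against; the authors are quoting the result of Koropecki--Le~Calvez--Tal and do not reprove it.

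As to whether your sketch is a viable outline of the argument in \cite{tripleboundary}: the broad strategy (prime-ends compactification of a lifted invariant disk, analysis of the induced circle rotation number, and a boundary argument producing an essential set of fixed points) is in the right spirit, but several steps as written are not correct or are too vague to constitute a proof. First, the dichotomy you set up on the prime-ends rotation number is mishandled: area preservation does \emph{not} by itself force the prime-ends rotation number to be rational, nor does an irrational rotation number produce a ``Denjoy-style invariant Cantor set inconsistent with recurrence''; the actual argument in \cite{tripleboundary} treats the irrational case by very different means (and indeed the irrational case is where the triple-boundary phenomenon is exploited). Second, your description of the ``packaging'' step is essentially a placeholder: you assert that three boundary regions in distinct $\Z^2$-translates are permuted in a way that yields a homologically nontrivial cycle in $\operatorname{Fix}(g)$, but you give no mechanism for why such three regions exist, why the permutation is nontrivial, or how this produces a \emph{connected} essential set of fixed points rather than merely scattered fixed points. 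Finally, your closing invocation of Lemma~\ref{lemma:primefilled} is misplaced: that lemma concerns invariance of the regularized filling of an inessential open set and plays no role in bounding the diameter of a single invariant disk.

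In short: there is nothing in the paper to compare with, and your sketch, while gesturing at the right circle of ideas, has genuine gaps at the two decisive points (the irrational prime-ends case and the construction of the essential fixed-point set).
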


\begin{lemma}\label{lema9}
Assume that $\rho_{mz}(\widetilde F) = \{0\}$ and that the set $\text{Fix}(f_0^j)$ is inessential for every $j \in \mathbb{N}$. Then, for every $p \in \text{Ine}(F)$, there exist a constant $M_p > 0$ and an open set $D_p$ containing $p$ such that  
\[
\left\| \widetilde{f}_x^n(\widetilde{q}) - \widetilde{q} \right\| < M_p \quad \text{for all } (x, q) \in \Sigma \times D_p \text{ and all } n \in \mathbb{N}.
\]
	
\end{lemma}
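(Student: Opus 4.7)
The plan is to locate $p$ inside a fully $\{f_0,f_1\}$-invariant inessential open set, use Theorem~\ref{tripleboundary} to bound the diameters of its connected components, and then exploit $\rho_{mz}(\widetilde F)=\{0\}$ to control how iterates jump between distinct lifts of these components.

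Pick a neighborhood $B$ of $p$ so that $U_B$ as in \eqref{up} is inessential. By construction $U_B$ is forward invariant under both $f_0$ and $f_1$ (any image $f_b(f_x^n(r))$ equals $f_y^{n+1}(r)$ for an appropriate concatenated $y$), so the same holds for $\text{Fill}^\prime(U_B)$, which by Lemma~\ref{lemma:primefilled} is in fact fully invariant under both $f_0$ and $f_1$. Let $W$ be the connected component of $\text{Fill}^\prime(U_B)$ containing $p$ and set $D_p=W$. Since $f_0,f_1$ are area preserving, the orbit of $W$ under the semigroup they generate is a finite family $W^{(1)},\ldots,W^{(N)}$ of components, all of area $\mathrm{area}(W)$, that $f_0$ and $f_1$ permute. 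Picking $k$ so that $f_0^k$ fixes each $W^{(i)}$ and applying Theorem~\ref{tripleboundary} to $g=f_0^k$ (area preserving with $\text{Fix}(f_0^k)$ inessential) yields a uniform $M>0$ with $\mathcal{D}(W^{(i)})<M$ for every $i$.

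Fix reference lifts $\widetilde{W^{(i)}}\subset\R^2$. Because $\widetilde f_b$ commutes with $\Z^2$-translations and permutes components of $\pi^{-1}(\text{Fill}^\prime(U_B))$, we can write $\widetilde f_b(\widetilde{W^{(i)}})=\widetilde{W^{(\sigma_b(i))}}+u_b(i)$ for permutations $\sigma_b$ of $\{1,\ldots,N\}$ and vectors $u_b(i)\in\Z^2$. Iterating along a word $x$ of length $n$ starting from the index $i_0$ of $W$ produces a sequence $i_0,\ldots,i_n$ and an integer displacement $v_n=\sum_{j=0}^{n-1}u_{x_j}(i_j)$ with $\widetilde f_x^n(\widetilde W)=\widetilde{W^{(i_n)}}+v_n$.

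The main obstacle is bounding $\|v_n\|$ uniformly in $(x,n)$; this reduces to showing that every cycle in the directed graph on $\{1,\ldots,N\}$ with edges $i\to\sigma_b(i)$ of weight $u_b(i)$ has zero total weight. For a periodic word $y$ of period $m$ whose associated path is a cycle at $i$, the $\Z^2$-equivariance of $\widetilde f_y^m$ gives $\widetilde f_y^{jm}(\widetilde p)-\widetilde p=jv_m+O(M)$ for any $\widetilde p\in\widetilde{W^{(i)}}$, so $\rho_{jm}(y,p)\to v_m/m\in\rho_{mz}(\widetilde F)=\{0\}$, forcing $v_m=0$. A standard pigeonhole argument then lets us reduce any forward path to a simple path of length at most $N-1$ by excising zero-weight cycles one at a time, giving $\|v_n\|\le(N-1)\max_{b,i}\|u_b(i)\|$. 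Combined with $\mathcal{D}(W^{(i_n)})<M$ and the bounded distances between points of the finite family $\{\widetilde{W^{(i)}}\}$, this produces the uniform constant $M_p$ claimed in the statement.
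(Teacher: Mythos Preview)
Your proof is correct and follows the same overall strategy as the paper: pass to the regularized filling of $U_B$, use Lemma~\ref{lemma:primefilled} to obtain a finite $\{f_0,f_1\}$-invariant family of disks containing $p$, bound their diameters via Theorem~\ref{tripleboundary} applied to a suitable power of $f_0$, and then use $\rho_{mz}(\widetilde F)=\{0\}$ together with a periodic-word construction to control the integer translations of the lifts.

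The one organisational difference is in the last step. The paper argues directly that for each target disk $D_k$ there is a \emph{unique} $v_k\in\Z^2$ with $\widetilde f_x^n(\widetilde D_1)=\widetilde D_k+v_k$ whenever $f_x^n(D_1)=D_k$; two distinct such vectors are ruled out by concatenating the two words (closing one up via nonwandering) into a periodic word with nonzero rotation vector. You instead phrase the problem as a weighted directed graph on $\{1,\dots,N\}$, show every closed walk has weight zero by the same periodic-word contradiction, and then invoke pigeonhole to reduce any walk to a simple path. These are equivalent formulations: ``all cycles have weight zero'' is the same as ``the weight of a walk depends only on its endpoints''. Your graph-theoretic packaging is arguably tidier, since it makes the final bound $\|v_n\|\le(N-1)\max_{b,i}\|u_b(i)\|$ completely explicit, whereas the paper leaves the bound implicit in the finite union $\bigcup_k(\widetilde D_k+v_k)$.
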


\begin{proof}
Let $p \in \text{Ine}(F)$. Since $p$ is an inessential point for $F$, there exists an open neighborhood $B$ of $p$ such that the set $U_p$ defined in (\ref{up}) is inessential. In particular, $U_p$ is forward invariant under both $f_0$ and $f_1$. Consider its regularized filling $U'_p = \text{Fill}'(U_p)$, which, by Lemma~\ref{lemma:primefilled}, is a union of disjoint topological disks invariant under both $f_0$ and $f_1$.

Let $D_1$ denote the connected component of $U'_p$ containing $p$. Since the total area is finite, there are only finitely many components; denote them by $D_1, \ldots, D_{r_p}$. These form a finite collection of disjoint disks that are permuted by both $f_0$ and $f_1$. Furthermore, since $f_0$ is nonwandering, there exists an integer $j$ such that each disk is invariant under $g = f_0^j$. By Theorem~\ref{tripleboundary}, there exists a constant $M_0 > 0$ such that the diameter of each $D_k$ satisfies $\mathcal{D}(D_k) < M_0$ for all $1 \le k \le r_p$.

	Fix, for each \(1 \le k \le r_p\), a lift \(\widetilde{D}_k\) of the disk \(D_k\), and choose a lift \(\widetilde{p}\) of \(p\) contained in \(\widetilde{D}_1\). We now show that for each \(k\), there exists a unique vector \(v_k \in \mathbb{Z}^2\) such that for any \(n \in \mathbb{N}\) and any \(x \in \Sigma\) for which the cocycle iteration satisfies \(f_x^n(D_1) = D_k\), the corresponding lift satisfies
	\[
	\widetilde{f}_x^n(\widetilde{D}_1) = \widetilde{D}_k + v_k.
	\]
	
Indeed, suppose for a contradiction that there exists an index \(1 \le k \le r_p\) and two different words \(x_1, x_2 \in \Sigma\) with corresponding iterates \(n_1, n_2 \in \mathbb{N}\) such that \(f_{x_1}^{n_1}(D_1) = D_k\) and \(f_{x_2}^{n_2}(D_1) = D_k\), but with two distinct integer vectors \(v_1 \neq v_2 \in \mathbb{Z}^2\) satisfying
\[
\widetilde{f}_{x_1}^{n_1}(\widetilde{D}_1) = \widetilde{D}_k + v_1 \quad \text{and} \quad \widetilde{f}_{x_2}^{n_2}(\widetilde{D}_1) = \widetilde{D}_k + v_2.
\]
Without loss of generality, we may assume \(v_1 = 0\) (by choosing an appropriate lift of \(D_k\)). Since \(f_{x_1}^{n_1}\) is nonwandering, there exists an integer \(\alpha > 0\) such that \(\left(f_{x_1}^{n_1}\right)^\alpha(D_1) = D_1\). We now construct a periodic element \(z \in \Sigma\) with period \(t = (\alpha - 1)n_1 + n_2\) such that the composition of maps takes points in \(\widetilde{D}_k\) to points in \(\widetilde{D}_k + v_2\), which will generate a nontrivial rotation vector, contradicting the assumption that \(\rho_{mz}(\widetilde F) = \{0\}\).

We now construct the periodic element \(z \in \Sigma\) as follows. Define the first \(t = (\alpha - 1)n_1 + n_2\) symbols of \(z\) by
$$z_i=\left\lbrace\begin{aligned}
	&x_{1,i-[i/n_1]n_1} &\text{ if }&  i=1\ldots (\alpha-1) n_1 ; \\
	&x_{2,i- (\alpha-1) n_1}&\text{ if } & i=(\alpha-1) n_1+1\ldots (\alpha-1) n_1+n_2, \\
\end{aligned}\right.$$
and extend \(z\) periodically with period \(t\). This construction ensures that the composition of the corresponding maps generates rotation for the elements in $\widetilde{D}_k$, thus 
\begin{eqnarray}\label{eq8}
	\widetilde f_z^{(\alpha-1) n_1+n_2}(\widetilde D_k)=&\widetilde f_z^{(\alpha-1) n_1+n_2}(\widetilde f^{n_1}_{x_1}(\widetilde D_1))\\ \nonumber
	=&\widetilde f^{n_2}_{x_2} \circ (\widetilde f_{x_1}^{n_1}(\widetilde D_1))^{\alpha}\\
	=&\widetilde f^{n_2}_{x_2}(\widetilde D_1)=\widetilde D_k +v_2. \nonumber
\end{eqnarray}
For any \(j \in \mathbb{N}\), let \(n_j = j t\). Then, for any point \(\widetilde{q} \in \widetilde{D}_k\), its iterate $
\widetilde{f}_z^{n_j}(\widetilde{q}) \in \widetilde{D}_k + j v_2.$
This implies that \(\widetilde{f}_z^{n_j}(\widetilde{q}) - j v_2 \in \widetilde{D}_k\), and since \(\mathcal{D}(\widetilde{D}_k) < M_0\), we have
\[
\left\| \widetilde{f}_z^{n_j}(\widetilde{q}) - j v_2 - \widetilde{q} \right\| < M_0.
\]

We now compute the rotation vector for the point \((z, \widetilde q)\). Since
\[
\frac{\widetilde{f}_z^{n_j}(\widetilde{q}) - \widetilde{q}}{n_j} = \frac{\widetilde{f}_z^{n_j}(\widetilde{q}) - j v_2 - \widetilde{q}}{j t} + \frac{v_2}{t},
\]
together with the fact that the first term on the right-hand side converges to zero, we have that
\[
\lim_{j \to \infty} \frac{\widetilde{f}_z^{n_j}(\widetilde{q}) - \widetilde{q}}{n_j} = \frac{v_2}{t} \neq 0,
\]
which contradicts the assumption that \(\rho_{mz}(\widetilde F) = \{0\}\).

This contradiction shows that the displacement vector \(v_k\) must be unique for each disk \(D_k\). Consequently, the orbit \(\widetilde{f}_x^n(\widetilde{p})\) is confined to the bounded set \(\bigcup_{k=1}^r (\widetilde{D}_k + v_k)\), which completes the proof of the lemma.

\end{proof}

\begin{prop}
Suppose that $\text{Fix}(f^j_0)$ is inessential for every $j\in\mathbb{N}$, and $\rho_{mz}(\widetilde F)$ is the null vector. If $\text{Ine}(F)$ is fully essential for $F$, then there exists a constant $M > 0$ such that
\[
\left\| \widetilde{f}_x^n(\widetilde{p}) - \widetilde{p} \right\| < M \quad \text{for all } (x, p) \in \Sigma \times \mathbb{T}^2 \text{ and } n \in \mathbb{N}.
\]
\end{prop}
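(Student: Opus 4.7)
\medskip
\noindent
\textbf{Proof proposal.} The strategy is to promote the local bounds from Lemma~\ref{lema9} to a uniform global bound via a compactness argument that leverages the fully essential hypothesis, combined with a continuity argument to transfer the bound from a dense subset of $\T$ to all of $\T$.

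First, I establish that $\ess(F)$ has empty interior. If some $q$ lay in the interior of $\ess(F)$, choose an open neighborhood $B$ of $q$ with $B \subset \ess(F)$. Since $\ine(F)$ is $f_i$-invariant for $i = 0, 1$, so is its complement $\ess(F)$; hence $U_B = \bigcup_{x \in \Sigma}\bigcup_{n \in \N} f_x^n(B) \subset \ess(F)$. Because $\ine(F)$ is fully essential, $\ess(F)$ is inessential, and therefore $U_B$ is inessential, contradicting the essentiality of $q$. Thus $\ine(F)$ is dense in $\T$. Since $\ess(F)$ is also closed and inessential, it lies in some inessential open set $U \subset \T$; the compact set $\T \setminus U$ is contained in $\ine(F)$, and so by Lemma~\ref{lema9} and compactness there is a finite subcover $D_{p_1}, \ldots, D_{p_N}$, yielding a uniform bound $M_0 = \max_i M_{p_i}$ on the displacement over $\T \setminus U$.

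To extend this bound to all of $\T$, it suffices, by continuity of the displacement $q \mapsto \widetilde f_x^n(\widetilde q) - \widetilde q$ (well defined because it is $\Z^2$-periodic) together with the density of $\ine(F)$, to show that there is a uniform constant $M$ with $M_p \le M$ for every $p \in \ine(F)$. I argue by contradiction: assume there is a sequence $p_k \in \ine(F)$ with $M_{p_k} \to \infty$, and pass to a convergent subsequence $p_k \to p^*$. If $p^* \in \ine(F)$, Lemma~\ref{lema9} applied at $p^*$ provides a neighborhood $D_{p^*}$ which contains all $p_k$ for $k$ large, so $M_{p_k}$ can be taken bounded by $M_{p^*}$, contradicting $M_{p_k} \to \infty$. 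If $p^* \in \ess(F)$, the blow-up of $M_{p_k}$ corresponds to the orbits of $D_{p_k}$ visiting increasingly distant lifts $\widetilde D^{(p_k)}_j + v^{(p_k)}_j$ in $\R^2$; leveraging that $\pi^{-1}(\ess(F))$ has only bounded components (as $\ine(F)$ is fully essential) together with Theorem~\ref{tripleboundary} and the cycling construction used in the proof of Lemma~\ref{lema9}, one extracts from a subsequence of these displacements a nontrivial vector in $\rho_{mz}(\widetilde F)$, contradicting the triviality hypothesis.

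The main obstacle is ruling out the case $p^* \in \ess(F)$: converting the blow-up of displacement constants near an essential accumulation point into a genuine nonzero rotation vector is the rotation-theoretic heart of the proof, and requires a careful selection of cocycle iterates with controlled average displacement, much in the spirit of the periodic-word construction in the proof of Lemma~\ref{lema9}.
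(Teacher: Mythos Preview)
Your route diverges from the paper's after your step~2, and the divergence introduces a genuine gap that the paper's argument avoids entirely.

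The paper never tries to bound $M_p$ uniformly over all of $\ine(F)$, and never argues by density plus continuity. Instead it exploits the topological meaning of ``fully essential'' in one clean stroke: since $\ine(F)$ is open and fully essential, it contains two loops in distinct nontrivial homotopy classes. Lifting these (and suitable integer translates) to $\R^2$ produces a compact curve $\Gamma\subset\pi^{-1}(\ine(F))$ bounding a region $U\subset\R^2$ that contains a fundamental domain. Compactness of $\Gamma$ together with Lemma~\ref{lema9} gives a single constant $M'$ with $\|\widetilde f_x^n(\Gamma)\|<M'$ for all $x,n$. Since each $\widetilde f_x^n$ is a homeomorphism, $\widetilde f_x^n(U)$ is a bounded set with boundary $\widetilde f_x^n(\Gamma)$, hence $\widetilde f_x^n(U)$ stays in a fixed bounded region of $\R^2$. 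As $U$ contains a fundamental domain, this yields the global bound immediately. No density argument, no accumulation-point dichotomy.

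The gap in your proposal is precisely where you say it is: the case $p^*\in\ess(F)$. You appeal to ``the cycling construction used in the proof of Lemma~\ref{lema9}'', but that construction requires two words sending the \emph{same} invariant disk $D_1$ to the \emph{same} disk $D_k$ with distinct integer translates, and Lemma~\ref{lema9} has already ruled this out for each fixed $p_k\in\ine(F)$. What you actually have is a sequence of \emph{different} disk systems $\{D^{(p_k)}_j, v^{(p_k)}_j\}$, one for each $k$, with the bounds $M_{p_k}$ (which depend on the number of disks, their diameters, and the sizes of the vectors $v^{(p_k)}_j$) tending to infinity. There is no evident mechanism to splice words across different $k$'s to produce linear displacement: large but finite $M_{p_k}$ is perfectly compatible with sublinear growth, hence with $\rho_{mz}(\widetilde F)=\{0\}$. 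Turning ``unbounded displacement near an essential point'' into ``nonzero rotation vector'' is exactly the kind of statement that is generally false without additional structure.

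Note, incidentally, that your step~2 already puts you within reach of the paper's argument: the set $\T\setminus U$ you construct is closed, contained in $\ine(F)$, and fully essential (its complement $U$ is inessential), so it carries the essential loops needed to build $\Gamma$. Your uniform bound $M_0$ on $\T\setminus U$ then finishes the proof via the bounded-boundary-implies-bounded-interior argument above, with no need for steps~3--4.
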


\begin{proof}
	
	Since $\text{Ine}(F)$ is fully essential, it contains two non-homotopically trivial loops $\gamma_1$ and $\gamma_2$. Consider their lifts $\widetilde{\gamma}_1$ and $\widetilde{\gamma}_2$ in $\mathbb{R}^2$. There exist sufficiently large integer vectors $v_1, v_2 \in \mathbb{Z}^2$ such that there exists a connected component of the complement of these curves is bounded and contains contains a fundamental domain of the torus. Let $\Gamma$ be the boundary of this connected component.

 For every point $p$ on $\Gamma$, let $D_p$, $M_p$ be the open set and the constant given by Lemma~\ref{lema9}, and let for every given $\widetilde{p}$, let $\widetilde{D}_p$ be the connected component of $\pi^{-1}(D_p)$ that contains a lift $\widetilde{p}$ of $p$. Since $\Gamma$ is compact, it can be covered by finitely many such disks $\widetilde{D}_{p_1}, \ldots, \widetilde{D}_{p_n}$. By Lemma~\ref{lema9}, for each $j$ there exists $M_j > 0$ such that
	\[
	\left\| \widetilde{f}_x^n(\widetilde{q}) - \widetilde{q} \right\| < M_j \quad \text{for all } n \in \mathbb{N} \text{ and all } (x, \widetilde{q}) \in \Sigma \times \widetilde{D}_{p_j}.
	\]
	Let \( M' = \max_j \{ M_j + \text{diam}(\widetilde{D}_{p_j}) \} \). Then, the image of $\Gamma$ under any iterate of the cocycle remains uniformly bounded, this means
	\[
	\left\| \widetilde{f}_x^n(\Gamma) \right\| < M' \quad \text{for all } n \in \mathbb{N} \text{ and all } x \in \Sigma.
	\]
	
	Now, let $U$ be the bounded connected component of the complement of $\Gamma$. By construction, $U$ contains a fundamental domain $D$. Since $\partial \widetilde{f}_x^n(U) = \widetilde{f}_x^n(\Gamma)$ is uniformly bounded and $\widetilde{f}_x^n(U)$ is connected, it follows that $\widetilde{f}_x^n(U)$ is also uniformly bounded for all $x$ and $n$. Therefore, for any $(x, \widetilde{p}) \in \Sigma \times D$ and any $n \in \mathbb{N}$, we have
	\[
	\left\| \widetilde{f}_x^n(\widetilde{p}) - \widetilde{p} \right\| \leq \left\| \widetilde{f}_x^n(\widetilde{p}) \right\| + \left\| \widetilde{p} \right\| < M' + \text{diam}(D),
	\]
	which completes the proof.
\end{proof}

\section{Shape of rotation sets}
 
 When considering a single homeomorphism $g \in \text{Homeo}_0(\mathbb{T}^2)$, it is well known that the Misiurewicz-Ziemian rotation set $\rho(\widetilde{g})$ is always a compact convex subset of $\mathbb{R}^2$~\cite{MZ}. However, the question of which compact convex sets can arise as rotation sets remains largely open. While there are examples of compact convex sets that cannot be rotation sets, it is still unknown, for instance, whether the unit disk can be the rotation set of a torus homeomorphism, or whether this is possible for any convex set with unaccountably many extremal points.
 
 In the context of random cocycles over topological shifts (RCTH), this property does not generalize: rotation sets need not be convex, even in the locally constant case over a finite shift.

\begin{example}
Let $\phi$ be an homeomorphism in $\mathbb{T}^2$ homotopic to the identity, having a lift $\widetilde \phi$ with rotation set $\rho(\widetilde \phi)= [0,1]^2$. 

If $X=\{ 0,1\}$, we define the action of the cocycle $\widetilde{F}$ in the fibers by $\widetilde{f}_0=\widetilde{\phi}$ and $\widetilde{f}_1=\widetilde{\phi}^{-1}$. Clearly, since $\rho(\widetilde \phi^{-1})=-\rho(\widetilde \phi)$, we get that the rotation set of $F$ contains $[-1,0]^2\cup[0,1]^2$, but since $\widetilde{f}^n_x$ is always equal to $\widetilde{\phi}^{j}$ for some $-n\le j\le n$, one sees that $\rho_{mz}(\widetilde{F})=[-1,0]^2\cup[0,1]^2$,  which is not convex.
\end{example}

However we can assure that the Miziurewicz-Zieman rotation set is always connected, even for the non-locally constant case.

\begin{prop}
$\rho_{mz}(\widetilde F)$ is always connected.
\end{prop}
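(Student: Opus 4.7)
The plan is to argue by contradiction. Suppose $\rho_{mz}(\widetilde F) = A \sqcup B$ with $A,B$ nonempty, disjoint, and closed; set $\delta := d(A,B) > 0$, $\varepsilon := \delta/10$, and $R := \sup_{(x,p) \in M} \|\rho_1(x,p)\|$, which is finite by continuity of $\rho_1$ on the compact space $M$. The underlying idea is that, for large enough $n$, the preimages $\rho_n^{-1}(N_\varepsilon(A))$ and $\rho_n^{-1}(N_\varepsilon(B))$ (with $N_\varepsilon(\cdot)$ denoting the open $\varepsilon$-neighborhood) would yield a nontrivial $F$-invariant clopen partition of $M$; pushing it down to the base would then contradict the topological transitivity of $\sigma$ on the full shift.

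I would begin by recording two telescoping estimates, both consequences of writing $\rho_n$ as a Birkhoff average of $\rho_1$:
\[
\bigl\|\rho_n(F(x,p)) - \rho_n(x,p)\bigr\| \leq \tfrac{2R}{n}, \qquad \bigl\|\rho_{n+1}(x,p) - \rho_n(x,p)\bigr\| \leq \tfrac{2R}{n+1}.
\]
A routine compactness argument using the $\limsup$ definition of $\rho_{mz}$ produces $N_0$ such that $D_n \subset N_\varepsilon(A) \cup N_\varepsilon(B)$ for every $n \geq N_0$. The crucial intermediate step is to extract one large $n$ for which $D_n$ meets both $N_\varepsilon(A)$ and $N_\varepsilon(B)$. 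If no such $n$ exists, then $\{n \geq N_0\}$ splits into infinite subsets $\mathcal{N}_A^* := \{n : D_n \subset N_\varepsilon(A)\}$ and $\mathcal{N}_B^*$ (both infinite because every point of $A$, respectively $B$, is approximated by $\rho_n$-values at arbitrarily large $n$). The second telescoping estimate gives the Hausdorff bound $d_H(D_n, D_{n+1}) \leq 2R/(n+1)$, while a flip $n \in \mathcal{N}_A^*$, $n+1 \in \mathcal{N}_B^*$ would force $d_H(D_n, D_{n+1}) \geq \delta - 2\varepsilon$. Hence for all sufficiently large $n$ the partition stabilizes, making one of $\mathcal{N}_A^*, \mathcal{N}_B^*$ finite, a contradiction.

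Having fixed such $n$, chosen also large enough that $2R/n < \varepsilon$, I would set $M_A := \rho_n^{-1}(N_\varepsilon(A))$ and $M_B := \rho_n^{-1}(N_\varepsilon(B))$: open, disjoint, nonempty, and covering $M$. The first telescoping estimate yields $F$-invariance of each, since $\rho_n(x,p) \in N_\varepsilon(A)$ implies $\rho_n(F(x,p)) \in N_{2\varepsilon}(A)$, which is disjoint from $N_\varepsilon(B)$, forcing $F(x,p) \in M_A$. On each fiber $\{x\} \times \T$, the traces of $M_A$ and $M_B$ form a pair of disjoint relatively open sets covering the connected torus, so exactly one is empty; this gives a product structure $M_A = \Sigma_A \times \T$ and $M_B = \Sigma_B \times \T$ with $\Sigma_A, \Sigma_B$ nonempty and clopen, and $F$-invariance translates into $\sigma$-invariance of $\Sigma_A$ and $\Sigma_B$. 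Topological transitivity of the shift then rules out such a nontrivial clopen invariant partition of $\Sigma$, giving the desired contradiction. The main obstacle is the intermediate step above, where one must convert the $\limsup$ hypothesis (which a priori only gives approximations at possibly different $n$'s for $A$ and $B$) into the existence of a single $n$ hitting both neighborhoods; once this is accomplished, the remainder is clean topological bookkeeping that exploits both the connectedness of the torus fibers and the transitivity of the shift.
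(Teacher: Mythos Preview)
Your proof is correct. Both your argument and the paper's rest on the same two ingredients---connectedness of the torus fibers and the full-shift structure of the base---but they are organized differently. The paper follows the time evolution $n\mapsto\rho_n(y,p)$ along a single point whose base sequence $y$ is built by concatenating a word landing in $W_1$ with one landing in $W_2$; since the increments $\rho_{n+1}-\rho_n$ are small the trajectory is trapped in $W_1$, yet the tail forces it toward $W_2$, giving the contradiction. You instead freeze one large $n$, produce the clopen $F$-invariant partition $M_A\sqcup M_B$, push it down via fibre-connectedness to a $\sigma$-invariant clopen partition $\Sigma_A\sqcup\Sigma_B$, and invoke topological transitivity of the full shift abstractly. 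Your route is cleaner and more modular, isolating exactly the hypothesis on the base (transitivity) that makes the result go through; the paper's route is more hands-on and in effect constructs the transitive orbit explicitly.

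One place where you work harder than needed is the intermediate step. Once you have your second telescoping estimate, it follows that $\rho_n(x,p)\in N_\varepsilon(A)$ forces $\rho_m(x,p)\in N_\varepsilon(A)$ for every $m\ge n$ (the trajectory is trapped). Then the existence of a single large $n$ with $D_n$ meeting both neighborhoods is immediate: pick any witnesses $(x_A,p_A)$, $(x_B,p_B)$ with $\rho_{n_A}(x_A,p_A)\in N_\varepsilon(A)$ and $\rho_{n_B}(x_B,p_B)\in N_\varepsilon(B)$, and take $n\ge\max(n_A,n_B)$. This avoids the Hausdorff-distance detour, though your argument there is also valid.
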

\begin{proof}
We already know that $\rho_{mz}$ is compact. Assume, for contradiction, that it is not connected. Then there exist two disjoint closed sets $K_1$ and $K_2$ such that $\rho_{mz} \subset K_1 \cup K_2$, with $\text{dist}(K_1, K_2) \geq 3\delta$ for some $\delta>0$, and there exist rotation vectors $v_1 \in K_1 \cap \rho_{mz}$ and $v_2 \in K_2 \cap \rho_{mz}$.

By the definition of rotation set, there must exist some $(x, p) \in \Sigma \times \mathbb{T}^2$ and $n_0 \in \mathbb{N}$ such that for all $n > n_0$ the average rotation vector $\rho_n(x, p)$ lies within a $\delta$-neighborhood of $K_1 \cup K_2$.

Furthermore, since the increments between successive averages are small, there exists $n_1 > n_0$ such that for all $(x, p)$ and all $n > n_1$,
\[
\| \rho_{n+1}(x, p) - \rho_n(x, p) \| < \delta.
\]
This implies that, if for some $n > n_1$ we have $\rho_n(x, p)$ belongs to the $\delta$- neighborhood of $K_1$, denoted by $W_1$, then for all $n_2 >n$, $\rho_{n_2}(x, p) \in W_1$ as well.

Now, for a fixed $x$, the map $q \mapsto \rho_{n_2}(x, q)$ is continuous, thus $ \rho_{n_2}(x,\mathbb{T}^2)$ is connected. Therefore, if $\rho_{n_2}(x, p) \in W_1$ for some $p$, then the entire image $\rho_{n_2}(x, \mathbb{T}^2)$ must be contained in $W_1$.

Finally, by the absolute continuity of $\widetilde{F}$, there exists an integer $L > n_2$ such that if two sequences $y$ and $y'$ in $\Sigma$ agree on all coordinates $|j| < L$, then for every $p \in \mathbb{T}^2$ the set $\rho_{n_1}(y, p) \in W_1$ if and only if $ \rho_{n_1}(y', p) \in W_1.$

Now, take a sequence $x$ such that $\rho_{n_1}(x, p) \in W_1$ for some $p$, and another sequence $x'$ such that $\rho_{n_1}(x', q) \in W_2$, implying that $\rho_{n_1}(x', \T) \subset W_2$. We construct a new sequence $y$ as
\[
y_j = 
\begin{cases}
	x_j & \text{for } j \leq L, \\
	x'_{j-2L} & \text{for } j > L.
\end{cases}
\]
By the absolute continuity property, since $y$ agrees with $x$ on the first $L$ coordinates, we have $\rho_{n_1}(y, p) \in W_1$ for all $p \in \mathbb{T}^2$.

Now, consider the average rotation vector at time $2L + M$ for some $M>0$ by
\[
\rho_{2L + M}(y, p) = \frac{2L \rho_{2L}(y, p) + M \rho_M(\sigma^{2L}(y), \widetilde{f}^{2L}_y(p))}{2L + M}.
\]
As $M$ increases, the term ${2L}/({2L + M})$ approaches to zero, thus
\[
\lim_{M \to \infty} \left\| \rho_{2L + M}(y, p) - \rho_M(\sigma^{2L}(y), \widetilde{f}^{2L}_y(p)) \right\| = 0.
\]
Note that for $M > L$, the sequence $\sigma^{2L}(y)$ coincides with $x'$ in the first $L$ coordinates. Therefore, by the absolute continuity property again, $\rho_M(\sigma^{2L}(y), \widetilde{f}^{2L}_y(p)) \in W_2$ for all $p$. Consequently, for sufficiently large $M$, $\rho_{2L + M}(y, p)$ must also lie in $W_2$.

However, since $y$ agrees with $x$ on the first $L$ coordinates, we also have $\rho_{2L + M}(y, p) \in W_1$ for all $M$. This is a contradiction, as $W_1$ and $W_2$ are disjoint. Therefore, the initial assumption that $\rho_{mz}$ is disconnected must be false.

\end{proof}

Furthermore, as mentioned in the introduction, it is shown in \cite{catainterior} that for the locally constant case, a dichotomy analogous to the classical one holds: either the rotation set $\rho_{\text{Per}(\sigma)}$ is contained in a line segment, or it must have nonempty interior. In the remainder of this section, we will demonstrate a specific scenario in which rotation sets are known to be convex, for $\epsilon$-pseudo-orbits of a single torus homeomorphism. We will consider two cases: when the map is conservative, and when the classical rotation set has nonempty interior.
	
\subsection{Case $\epsilon$-pseudo orbits of conservative homeos}	
	
Our aim in this section is to define a cocycle $F$ that encodes the dynamics of $\epsilon$-pseudo-orbits for a given small $\epsilon > 0$.

To construct this, let $X = B_{\epsilon}(0) \subset \mathbb{T}^2$ be the open ball of radius $\epsilon$ centered at $0$. Here, the shift space $\Sigma = X^{\mathbb{Z}}$ represents all possible sequences of perturbations. We fix a conservative homeomorphism $g \in \text{Hom}_0(\mathbb{T}^2)$ homotopic to the identity, along with its lift $\widetilde{g}$ to $\mathbb{R}^2$, and define the cocycle $F: \Sigma \times \mathbb{T}^2 \to \Sigma \times \mathbb{T}^2$ by
\[
F(x, p) = (\sigma(x), g(p) + x_0),
\]
with a lift $\widetilde{F}: \Sigma \times \mathbb{R}^2 \to \Sigma \times \mathbb{R}^2$ chosen to be continuous and such that
\[
\widetilde{F}(x, \widetilde{p}) = (\sigma(x), \widetilde{g}(\widetilde{p})) \quad \text{if } x_0 = 0.
\]
It is clear that the orbit of a point $(x, p)$ under the fiber maps $f_x(p) = g(p) + x_0$ corresponds to an $\epsilon$-pseudo-orbit of $g$ in $\mathbb{T}^2$ starting at $p$.

	We now define the sets of points that can be connected to a point \( p \) via an \(\epsilon\)-pseudo-orbit of a given length.
	
\begin{defn}
 The set of points from which there exists an \(\epsilon\)-pseudo-orbit reaching \( p \) in exactly \( k \) steps is defined by
	\[
	\Theta^{-,k}_{\epsilon}(p) = \left\{ q \in \mathbb{T}^2 \ \middle| \ \exists\, x \in \Sigma \text{ such that } f_x^k(q) = p \right\}.
	\]
	The union over all \( k \geq 0 \) gives the set of all points that eventually reach \( p \)
	\[
	\Theta^{-}_{\epsilon}(p) = \bigcup_{k \geq 0} \Theta^{-,k}_{\epsilon}(p).
	\]
	Analogously, we define the set of points that can be reached from \( p \) in exactly \( k \) steps by
	\[
	\Theta^{+,k}_{\epsilon}(p) = \left\{ q \in \mathbb{T}^2 \ \middle| \ \exists\, x \in \Sigma \text{ such that } f_x^k(p) = q \right\},
	\]
	and the union over all \( k \geq 0 \) gives the set of all points that are eventually reachable from \( p \)
	\[
	\Theta^{+}_{\epsilon}(p) = \bigcup_{k \geq 0} \Theta^{+,k}_{\epsilon}(p).
	\]

\end{defn}

\begin{lemma}\label{1}
For $g$ a conservative map, there exists $N_0>0$ such that for any pair $p,q\in\mathbb T^2$ we can find a $x=x(p,q)$ and $N<N_0$ satisfying $f^N_{x}(p)=q$. 	
\end{lemma}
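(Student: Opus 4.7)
The plan is a two-step argument: first establish that every pair $(p,q)\in\T\times\T$ is connected by at least one $\epsilon$-pseudo-orbit of $g$ (equivalently, some $x\in\Sigma$ and $N\in\mathbb{N}$ satisfy $f_x^N(p)=q$), then upgrade this pointwise reachability to a uniform bound through a compactness argument on $\T\times\T$.

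For the reachability step, I would introduce the relation $p\sim q$ meaning both $q\in\Theta^+_\epsilon(p)$ and $p\in\Theta^+_\epsilon(q)$. Symmetry holds by definition; transitivity by concatenating pseudo-orbits; and reflexivity is where the conservativity of $g$ enters: since $p\in\Omega(g)=\T$ and $g$ is uniformly continuous, I pick $\delta\in(0,\epsilon)$ with $\|a-b\|<\delta\Rightarrow\|g(a)-g(b)\|<\epsilon$, and then the nonwandering property produces $r\in B_\delta(p)$ and $n\ge1$ with $g^n(r)\in B_\delta(p)$, so that $p,g(r),g^2(r),\ldots,g^{n-1}(r),p$ is a valid $\epsilon$-pseudo-orbit of length $n$ from $p$ back to $p$.

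The critical observation is that the equivalence classes are open. Indeed, if $p=p_0,p_1,\ldots,p_n=q$ realizes $q\in\Theta^+_\epsilon(p)$, then for $p'$ sufficiently close to $p$ the modified sequence $p',p_1,\ldots,p_n$ remains an $\epsilon$-pseudo-orbit by continuity of $g$, so $q\in\Theta^+_\epsilon(p')$. Symmetrically, a pseudo-orbit from $q$ to $p$ has its endpoint perturbed to $p'$, giving $p'\in\Theta^+_\epsilon(q)$, and perturbations near $q$ are handled in the same way. Since $\T$ is connected and partitioned into nonempty open equivalence classes, there is a single class, namely $\T$ itself, and reachability holds for every pair.

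For the uniform bound, define $\Pi^{(k)}=\{(p,q)\in\T\times\T : \exists\, x\in\Sigma,\ f_x^k(p)=q\}$ for each $k\ge 1$. I claim each $\Pi^{(k)}$ is open: given $(p_0,q_0)\in\Pi^{(k)}$ realized by some $x^0$ with $\|x^0_i\|<\epsilon$, for $(p,q)$ near $(p_0,q_0)$ I keep the first $k-1$ perturbations equal to $x^0_0,\ldots,x^0_{k-2}$ and set $x_{k-1}:=q-g(f^{k-1}_{x^0}(p))$; by continuity of $g$ and of $f^{k-1}_{x^0}$ this value remains in $B_\epsilon(0)$, producing $f_x^k(p)=q$. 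By the reachability step, $\bigcup_{k\ge1}\Pi^{(k)}$ covers the compact set $\T\times\T$, so finitely many suffice, say $\bigcup_{k=1}^{M}\Pi^{(k)}=\T\times\T$. Setting $N_0=M+1$ then yields the bound $N<N_0$. I expect the main obstacle to be the reachability step, specifically the rigorous verification that the equivalence classes are open via endpoint perturbations and that reflexivity genuinely uses the conservativity hypothesis; once these are in place, the compactness argument is a clean finish.
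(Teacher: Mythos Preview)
Your proof is correct. The overall architecture---reachability from the nonwandering hypothesis, then compactness for a uniform bound---matches the paper, but your implementation differs in both halves. For reachability, the paper fixes $p$ and argues directly that $\Theta^{+}_{\epsilon}(p)=\T$ by showing its boundary is empty (taking a nearby point, applying the nonwandering property, and absorbing an $\epsilon$-ball); you instead set up the equivalence relation, use the nonwandering property to get nontrivial reflexive pseudo-loops, and deduce that the classes are open so connectedness of $\T$ forces a single class. For the uniform bound, the paper first extracts a finite cover of $\T$ by sets $\Theta^{+,N_i}_{\epsilon}(p)$ for a fixed $p$, then argues the bound is locally constant on $\T\times\T$ and appeals to compactness a second time; you go straight to the open sets $\Pi^{(k)}\subset\T\times\T$ and apply compactness once. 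Your route is a bit more streamlined, especially in the second half; the paper's boundary argument is perhaps more hands-on but is doing the same work as your openness-of-classes step. One small remark: your reflexivity argument is genuinely needed for the openness step (the trivial $k=0$ membership $p\in\Theta^{+}_{\epsilon}(p)$ would not suffice to perturb endpoints), and you correctly identified this as the place where conservativity enters.
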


\begin{proof}
	For a fixed point \( p \in \mathbb{T}^2 \), we show that the set \(\Theta^{+}_{\epsilon}(p)\) covers the entire torus \( \mathbb{T}^2 \).
	
	Suppose, for contradiction, that there exists a point \( y \in \partial \Theta^{+}_{\epsilon}(p) \). Then there exists \( z \in \Theta^{+}_{\epsilon}(p) \) such that \( \|y - z\| < \epsilon/4 \). Since \( z \in \Theta^{+}_{\epsilon}(p) \) and it is non-wandering for \( g \), there exists \( w \in B_{\epsilon/4}(z) \) and an integer \( N > 0 \) such that \( g^N(w) \in B_{\epsilon/4}(z) \). By construction, both \( w \) and \( g^N(w) \) belong to \( \Theta^{+}_{\epsilon}(p) \), and hence the entire ball \( B_\epsilon(g^N(w)) \) is contained in \( \Theta^{+}_{\epsilon}(p) \). Since \( y \in B_\epsilon(g^N(w)) \), it follows that \( y \in \Theta^{+}_{\epsilon}(p) \), contradicting the assumption that \( y \) is a boundary point. Therefore, \( \Theta^{+}_{\epsilon}(p) = \mathbb{T}^2 \).
	
	Next, we show that there is a uniform bound on the length of pseudo-orbits connecting any two points. Since each set $\Theta^{+,N}_{\epsilon}(p)$ is open and their union over \( N \) covers \( \mathbb{T}^2 \), there exists a finite collection \( N_1, \ldots, N_m \) such that
	\[
	\Theta^{+}_{\epsilon}(p) = \bigcup_{i=1}^m \Theta^{+,N_i}_{\epsilon}(p).
	\]
	This yields a maximum length \( N_m \) for pseudo-orbits starting at this specific \( p \). 
	
	Let \( \delta > 0 \) be a constant such that \( B_{\delta}(p) \subset g^{-1}(B_{\epsilon}(g(p))) \). For any pair \( (p, q) \in \mathbb{T}^2 \times \mathbb{T}^2 \), we know there exists a maximum length \( N_0 \in \mathbb{N} \) such that \( q \in \Theta^{+,N_0}_{\epsilon}(p) \). This construction allows us to form an \(\epsilon\)-pseudo-orbit from any point in \( B_{\delta}(p) \) to any point in \( B_{\epsilon}(q) \) in \( N_0 \) steps, implying that the maximum length \( N_0 \) is constant on \( B_{\delta}(p) \times B_{\epsilon}(q) \). If we construct an open cover of $\mathbb T^2\times \mathbb T^2$ with this logic, by compactness we  can subtract a finite sub cover of it. Since the open sets overlap and the maximum length remains constant in each one of them, then we conclude that it is in fact constant for any pair $(p,q)\in \mathbb{T}^2\times\mathbb{T}^2$.

\end{proof}
	
\begin{prop}
	If $g$ is conservative then $\rho_{mz}(\widetilde F)$ is convex. 
\end{prop}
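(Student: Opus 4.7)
The plan is to use Lemma \ref{1} as a bridging tool to concatenate orbit segments realizing $v_1$ and $v_2$, producing a single long orbit whose time-averaged displacement approximates a given convex combination. Fix $v_1, v_2 \in \rho_{mz}(\widetilde F)$ and $t \in [0,1]$; write $v = tv_1 + (1-t)v_2$. My goal is to construct, for each $\eta > 0$, a point $(y, p)\in M$ and an arbitrarily large time $T$ with $\|\rho_T(y, p) - v\| < \eta$, which by definition places $v$ in $\rho_{mz}(\widetilde F)$.

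By the definition of $\rho_{mz}$, for each $k \in \N$ and each $i \in \{1,2\}$ I would pick $n_k^{(i)} \ge k$ and $(x_k^{(i)}, p_k^{(i)}) \in M$ with $\|\rho_{n_k^{(i)}}(x_k^{(i)}, p_k^{(i)}) - v_i\| < 1/k$. Lemma \ref{1} then provides, for each $k$, three bridge words of length at most $N_0$: one returning $f_{x_k^{(1)}}^{n_k^{(1)}}(p_k^{(1)})$ back to $p_k^{(1)}$, one returning $f_{x_k^{(2)}}^{n_k^{(2)}}(p_k^{(2)})$ back to $p_k^{(2)}$, and one sending $p_k^{(1)}$ to $p_k^{(2)}$. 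Since every single-step lifted displacement $\widetilde f_{x_0}-\mathrm{id}$ is uniformly bounded by some constant $C$ (it is $\Z^2$-periodic and varies continuously with $x_0$ on the compact space $\Sigma$), every bridge contributes lifted displacement of norm at most $CN_0$.

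Next I would choose positive integers $A_k^{(1)}, A_k^{(2)}$ tending to $\infty$ such that $A_k^{(1)} n_k^{(1)}/(A_k^{(1)} n_k^{(1)} + A_k^{(2)} n_k^{(2)}) \to t$ (take, for instance, $A_k^{(i)} = \lfloor K_k\, t_i / n_k^{(i)} \rfloor$ with $t_1 = t$, $t_2 = 1-t$, and $K_k = (n_k^{(1)} n_k^{(2)})^2$), and build a word $y_k \in \Sigma$ by concatenating $A_k^{(1)}$ blocks of ``run $x_k^{(1)}$ for $n_k^{(1)}$ symbols, then apply the bridge back to $p_k^{(1)}$,'' followed by the bridge from $p_k^{(1)}$ to $p_k^{(2)}$, followed by $A_k^{(2)}$ analogous blocks built from $x_k^{(2)}$. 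Starting the orbit at $(y_k, p_k^{(1)})$ with any lift $\widetilde p_k^{(1)}$, the $\Z^2$-equivariance of the lifted fiber maps ensures that each $v_i$-cycle contributes the same lifted increment (the $n_k^{(i)}$-step displacement plus a bounded bridge vector), so over the total time $T_k$ (the sum of all block and bridge lengths) the net lifted displacement equals
\[
A_k^{(1)} \bigl(\widetilde f_{x_k^{(1)}}^{n_k^{(1)}}(\widetilde p_k^{(1)}) - \widetilde p_k^{(1)}\bigr) + A_k^{(2)}\bigl(\widetilde f_{x_k^{(2)}}^{n_k^{(2)}}(\widetilde p_k^{(2)}) - \widetilde p_k^{(2)}\bigr) + E_k,
\]
with $\|E_k\| \le (A_k^{(1)} + A_k^{(2)} + 1) C N_0$.

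Dividing by $T_k = A_k^{(1)} n_k^{(1)} + A_k^{(2)} n_k^{(2)} + O((A_k^{(1)}+A_k^{(2)})N_0)$ and using $\|\widetilde f_{x_k^{(i)}}^{n_k^{(i)}}(\widetilde p_k^{(i)}) - \widetilde p_k^{(i)} - n_k^{(i)} v_i\| < n_k^{(i)}/k$, all error terms become of order $O(1/k) + O(N_0/\min(n_k^{(1)}, n_k^{(2)})) \to 0$ as $k\to\infty$, while the main term approaches $tv_1 + (1-t)v_2$ by construction of the $A_k^{(i)}$. Hence $\rho_{T_k}(y_k, p_k^{(1)}) \to v$ with $T_k \to \infty$, so $v \in \rho_{mz}(\widetilde F)$. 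The main technical obstacle will be the combinatorial bookkeeping: simultaneously controlling the $1/k$-error in the $v_i$-approximations, the uniformly bounded bridge corrections, and the integer rounding in the multiplicities $A_k^{(i)}$, so that all three become negligible relative to $T_k$. This is precisely where the conservativity of $g$ enters the argument via Lemma \ref{1}: it is what produces the uniform bound $N_0$ on bridge lengths, without which the bridge error would not be dominated by $T_k$.
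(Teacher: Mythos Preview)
Your proof is correct and follows essentially the same approach as the paper's: both use Lemma~\ref{1} to obtain uniformly bounded bridges, concatenate $v_1$- and $v_2$-realizing orbit segments in controlled multiplicities, and show the bridge and rounding errors are $O(N_0/\min n_k^{(i)})$ hence negligible. The only cosmetic differences are that the paper restricts to rational $\alpha=a/b$ (using closedness of $\rho_{mz}$ afterward) and chooses the specific multiplicities $am_1$, $(b-a)n_1$ to make the constructed orbit periodic, whereas you handle all $t\in[0,1]$ directly with a more flexible choice of $A_k^{(i)}$.
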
	

\begin{proof}
	 
	 We prove that given two points $v_1,v_2 \in \rho_{mz}(\widetilde F)$ any convex combination of them is contained in $\rho_{mz}(\widetilde F)$. 

First we fix $N_0>0$ as in Lemma \ref{1}.
 
Let $(x_{n_k},p_{n_k})$ and $(y_{n_k},q_{n_k} )$ sub-sequences such that
$$\label{limit}\lim_{k\to\infty} \frac{\widetilde{f}^{n_k}_{{x}_{n_k}}(\widetilde p_{n_k})-\widetilde{p}_{n_k} }{{n_k}}=v_1\text{ and }\lim_{k\to\infty}\frac{\widetilde{f}^{m_l}_{{y}_{m_l}}(\widetilde q_{m_l})-\widetilde q_{m_l}}{m_l}=v_2, $$
this means that it is possible to chose for $\delta>0$ integers $k_1,l_1>0$ that for $k>k_1,$ and $l>l_1$
$$\left\| \frac{\widetilde{f}^{n_k}_{{x}_{n_k}}(\widetilde p_{n_k})-\widetilde{p}_{n_k} }{{n_k}}-v_1\right\|<\frac{\delta}{6} \text{ and }\left\|\frac{\widetilde{f}^{m_l}_{{y}_{m_l}}(\widetilde q_{m_l})-\widetilde q_{m_l} }{m_l}-v_2\right\|<\frac{\delta}{6}. $$

In particular, if $N_0>0$ is the maximum length of pseudo-orbits obtained in Lemma \ref{1}, we fix $n_1=n_{k_1}$ and $m_1=m_{l_1}$ both $n_1,m_1\gg N_0$, denoting $ f^{n_1}_{x_{n_1}}(p_{n_1})=p^1$ and $f^{m_1}_{y_{m_1}}(q_{m_1})=q^1$. To follow we construct an element $(z,p_{n_1})$ in $\Sigma\times \mathbb{T}^2$ for which there is an iteration of the cocycle returning to $p_{n_1}$ and $q_{m_1}$ enough times to approximate an element of the segment between $v_1$ and $v_2$. Consequently, by Lemma \ref{1} there exist integers $n_2,\, n_3,m_2, m_3<N_0$ and sequences $x^1, x^2,y^1, y^2\in\Sigma$ such that 
$f^{n_2}_{x^1}(p^1)=p_{n_1}$, $f^{n_3}_{x^2}(p_{n_1})=q_{m_1}$, $f^{m_2}_{y^1}(q^1)=q_{m_1}$ and $f^{m_3}_{y^2}(q_{m_1})=p_{n_1}$.

For $\alpha\in[0,1]\cap\mathbb{Q}$ given by $\alpha=a/b$ with $a,b$ positive integers, we construct a periodic sequence $z\in \Sigma$ by 
$$\begin{aligned}
z_{j(n_1+n_2)+i}=&x_i&\text{ if }i&=1,\ldots {n_1}\\
z_{j(n_1+n_2)+i}=&x^1_{i-n_1}&\text{ if }i&=1+{n_1},\ldots, {n_1}+n_2,
\end{aligned} $$
for $j=0,\ldots, m_1a-1$ iterates; it continues with
$$\begin{aligned}
z_{am_1(n_1+n_2)+i}=&x^2_{i}&\text{ if }i&=1,\ldots n_3;\\
\end{aligned} $$
and we compute for $j=0,\ldots , n_1(b-a)-1$ the following elements by  
$$\begin{aligned}
	z_{am_1(n_1+n_2)+n_3+j(m_1+m_2)+i}=&y_i&\text{ if }i&=1,\ldots m_1\\
	z_{am_1(n_1+n_2)+n_3+j(m_1+m_2)+i}=&y^1_{i-m_1}&\text{ if }i&=1+m_1,\ldots, m_1+m_2;
\end{aligned} $$
finally
$$\begin{aligned}
	z_{am_1(n_1+n_2)+n_3+(b-a)n_1(m_1+m_2)+i}=&y^2_{i}&\text{ if }i&=1,\ldots m_3.\\
\end{aligned} $$
The element $(z,p_{n_1})$ is periodic for the cocycle $F$ with period $am_1(n_1+n_2)+n_3+(b-a)n_1(m_1+m_2)+m_3=bm_1n_1+N$, where $N$ has a lineal growth with respect to the variables $n_1$ and $m_1$.

Thus the integer vector $\rho(z,p_{n_1})$ can be written by 
$$\begin{aligned}
	\frac{\widetilde{f}^{bm_1n_1+N}_{z}(\widetilde p_{n_1})-\widetilde p_{n_1}}{bm_1n_1+N}&=\,\, \frac{(b-a)n_1}{bm_1n_1+N}\left(\widetilde f^{m_2}_{y^1}(\widetilde q^1)-\widetilde q^1+\widetilde f^{m_1}_{y_{m_1}}(\widetilde q_{m_1})-\widetilde q_{m_1}\right)\\
	&+\frac{am_1}{bm_1n_1+N}\left(\widetilde f^{n_2}_{x^1}(\widetilde p^1)-\widetilde p^1+\widetilde f^{n_1}_{x_{n_1}}(\widetilde p_{n_1})-\widetilde p_{n_1}\right)\\ &+\frac{C}{bm_1n_1+N},\\
\end{aligned}$$
which is equal to 
$$\begin{aligned}&\frac{(b-a)n_1}{bm_1n_1+N}\left(\widetilde f^{m_2}_{y^1}(\widetilde q^1)-\widetilde q^1\right)+\frac{(b-a)n_1m_1}{bm_1n_1+N}\rho_{m_1}(y_{m_1},q_{m_1})+\\
	&+\frac{am_1}{bm_1n_1+N}\left(\widetilde f^{n_2}_{x^1}(\widetilde p^1)-\widetilde p^1\right)+\frac{an_1m_1}{bm_1n_1+N}\rho_{n_1}(x_{n_1},p_{n_1})+
	\\ &+\frac{C}{bm_1n_1+N},\\
\end{aligned}$$
where $C$ is a constant integer vector. For concluding the result we need to find a bound for $\widetilde f^{n}_{x}(\widetilde p)-\widetilde p$ for every $(x,p)$ and $n<N_0$.
 
We assert that the map $\phi_x\colon \mathbb R^2\to \mathbb R^2$ given by $\phi_x(\widetilde p)=\widetilde f_x(\widetilde p)-\widetilde p$ is uniformly bounded for every $x\in \Sigma$. This is a consequence of $g(p)-p$ being uniformly bounded in $\mathbb T^2$ and since $g$ is homotopic to the identity, its lift is also bounded in $\mathbb{R}^2$, finally because $\widetilde f_x$ is a perturbation of at most $\epsilon$ of $\widetilde g$ we have that $\phi_x$ is in fact bounded.   

If we write $$\begin{aligned} f^{n}_{x}(\widetilde p)-\widetilde p=&\sum_{i=0}^{n-1}\left( \widetilde f_{\sigma^j(x)}(\widetilde f^{j-1}_{x}(\widetilde p))-\widetilde f^{j-1}_{x}(\widetilde p)\right)\\ =& \sum_{i=0}^{n-1} \phi_x(\widetilde f^{j-1}_{x}(\widetilde p))<N_0\sup \phi.\end{aligned}$$

Finally we can increase the indices for the sub-sequences $n_1$ and $m_1$ such that 
$$\begin{aligned} \frac{am_1}{bm_1n_1+N}\left\|\widetilde f^{n_2}_{x^1}(\widetilde p^1)-\widetilde p^1\right\|&\leq \frac{am_1N_0 \sup \phi }{bm_1n_1+N}< \frac{\delta}{6},\\
\frac{(b-a)n_1}{bm_1n_1+N}\left\|\widetilde f^{m_2}_{y^1}(\widetilde q^1)-\widetilde q^1\right\|&\frac{\leq(b-a)n_1 N_0\sup \phi}{bm_1n_1+N}<\frac{\delta}{6},\end{aligned} $$
and $$\begin{aligned}\left\|\frac{an_1m_1}{bm_1n_1+N}- \frac{a}{b}\right\|<\frac{\delta}{6},\\ \left\|\frac{(b-a)n_1m_1}{bm_1n_1+N}- \frac{b-a}{b}\right\|<\frac{\delta}{6}\end{aligned},$$
that together with equation (\ref{limit})  we conclude any point in the segment that joint $v_1$ and $v_2$ can be approximated by the rotation vector of a periodic point $(z,p_{n_1})$.
\end{proof}

\subsection{$\epsilon$-pseudo-orbits rotation set, $\rho_{mz}^\circ(\widetilde g)\neq \emptyset$}

We now turn to the more general case where \( g \) is not necessarily conservative, but its Misiurewicz-Ziemian rotation set satisfies \( \rho_{mz}^\circ(\widetilde{g}) \neq \emptyset \). The cocycle \( F \) and its lift \( \widetilde{F} \) are defined as in the previous sections. Under this assumption, we establish two lemmas that describe the structure of the sets $\Theta^+$ and $\Theta^{-}$. The first Lemma is immediate.  

\begin{lemma}
	Let \( \delta > 0 \) be such that \( \|g(p_1) - g(p_2)\| < \epsilon \) whenever \( \|p_1 - p_2\| < \delta \). Then, for every \( p \in \mathbb{T}^2 \),
	\[
	\bigcup_{i > 0} g^i(B_\delta(p)) \subset \Theta^+_\epsilon(p) \quad \text{and} \quad \bigcup_{i < 0} g^i(B_\delta(p)) \subset \Theta^-_\epsilon(p).
	\]
\end{lemma}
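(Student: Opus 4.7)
The plan is to write down explicit sequences $x \in \Sigma$ of perturbations that witness both inclusions, exploiting that the cocycle step $f_x(\cdot) = g(\cdot) + x_0$ permits one translation of norm strictly less than $\epsilon$ per iterate. Throughout, I would assume without loss of generality that $\delta \leq \epsilon$: the continuity hypothesis in the statement is monotone in $\delta$, so replacing $\delta$ by $\min(\delta,\epsilon)$ preserves it, and we will need this in the backward case.

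For the forward inclusion, given $q \in B_\delta(p)$ and $i > 0$, I would realize $g^i(q)$ as the $i$-th cocycle iterate of $p$ by spending the only non-trivial perturbation at the first step: take $x_0 = g(q) - g(p)$, which lies in $B_\epsilon(0)$ by the very definition of $\delta$, and $x_\ell = 0$ for $\ell \geq 1$. A short induction then gives $f_x^\ell(p) = g^\ell(q)$ for all $\ell \geq 1$, so in particular $g^i(q) \in \Theta^{+,i}_\epsilon(p) \subseteq \Theta^+_\epsilon(p)$.

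For the backward inclusion, given $q \in B_\delta(p)$ and $i = -j < 0$, I would drive $g^{-j}(q)$ to $p$ by delaying the perturbation to the last step: take $x_\ell = 0$ for $0 \leq \ell \leq j-2$ and $x_{j-1} = p - q$, which has norm less than $\delta \leq \epsilon$. The first $j-1$ iterations then act as pure $g$, yielding $f_x^{j-1}(g^{-j}(q)) = g^{-1}(q)$, and the final step gives $f_x^j(g^{-j}(q)) = g(g^{-1}(q)) + (p - q) = p$. Hence $g^{-j}(q) \in \Theta^{-,j}_\epsilon(p) \subseteq \Theta^-_\epsilon(p)$.

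There is no genuine obstacle in this argument: it is bookkeeping once one identifies the natural choices (early perturbation for the forward case, late perturbation for the backward case). The only subtle point is the reduction $\delta \leq \epsilon$ in the backward construction; the forward continuity bound alone controls $\|g(p)-g(q)\|$ but not $\|p-q\|$, so the last perturbation requires its own bound. The monotonicity of the continuity hypothesis makes this a cost-free reduction.
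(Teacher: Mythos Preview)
Your argument is correct and supplies exactly the direct verification the paper has in mind: in the text the lemma is simply declared ``immediate'' with no proof given, so there is nothing further to compare against. The early-perturbation/late-perturbation split is the natural way to witness the two inclusions.

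One small point deserves tightening. You justify the reduction to $\delta \le \epsilon$ by monotonicity of the \emph{hypothesis}, but the \emph{conclusion} also depends on $\delta$ (larger $\delta$ means a larger ball and hence a stronger statement), so replacing $\delta$ by $\min(\delta,\epsilon)$ is not literally ``without loss of generality'' on those grounds alone. The reduction is nonetheless valid because the case $\delta>\epsilon$ is vacuous for a homeomorphism isotopic to the identity: lifting to $\mathbb{R}^2$, take a segment from $\widetilde p$ to $\widetilde p+(M,0)$, partition it into $N$ pieces of length $M/N<\delta$, and telescope to get $M=\|\widetilde g(\widetilde p+(M,0))-\widetilde g(\widetilde p)\|<N\epsilon$; choosing $M$ large enough that an integer $N$ lies in $(M/\delta,\,M/\epsilon]$ yields $M<N\epsilon\le M$, a contradiction. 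It would be cleaner to state this (or simply to note that the uniform-continuity $\delta$ can always be taken $\le\epsilon$) rather than invoke monotonicity of the hypothesis.
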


The next result shows that, under suitable recurrence conditions, the pseudo-orbit sets become large in a topological sense.

\begin{lemma}
	If \( p \) is a recurrent and typical point for some \( g \)-invariant ergodic measure with a totally irrational rotation vector, then the sets \( \Theta^+_\epsilon(p) \) and \( \Theta^-_\epsilon(p) \) are fully essential.
\end{lemma}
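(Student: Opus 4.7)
The plan is to argue by contradiction, exploiting the interplay between total irrationality of the rotation vector $v$ and homological constraints in $\mathbb{T}^2$. The argument for $\Theta^-_\epsilon(p)$ will follow by symmetry applied to $g^{-1}$, whose rotation vector $-v$ is also totally irrational, so I will concentrate on $\Theta^+_\epsilon(p)$.

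First I would record the key inclusion $\Theta^+_\epsilon(p)\supset N_\epsilon(K)$, where $K=\overline{\{g^n(p):n\geq 0\}}$ is the forward orbit closure. Indeed, for every $n\geq 1$ one has $B_\epsilon(g^n(p))\subset \Theta^+_\epsilon(p)$ (take a length-$n$ pseudo-orbit that follows the actual orbit and perturbs only the final step), and combining this with $p\in\Theta^+_\epsilon(p)$ together with the recurrence of $p$ yields the claimed inclusion. Since $p$ is typical for $\mu$, one has $K=\operatorname{supp}(\mu)$ and Birkhoff gives $(\widetilde g^n(\widetilde p)-\widetilde p)/n\to v$.

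I would then suppose for contradiction that $\Theta^+_\epsilon(p)$ is not fully essential, so $\mathbb{T}^2\setminus N_\epsilon(K)$ contains an essential connected open set $W$; let $\widetilde W$ be a connected component of $\pi^{-1}(W)$ and let $H\leq\mathbb{Z}^2$ be its (nontrivial) stabilizer. If $H=\mathbb{Z}(a,b)$ has rank one with $(a,b)$ primitive, then $\widetilde W$ is a strip-like open set invariant under translation by $(a,b)$ and bounded in the perpendicular direction $u^\perp=(-b,a)$, and its $\mathbb{Z}^2$-translates form a parallel family of such strips (with perpendicular spacing $1/\|(a,b)\|$) disjoint from $\pi^{-1}(K)$. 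Birkhoff then gives the perpendicular displacement $\widetilde g^n(\widetilde p)\cdot u^\perp$ growing at rate $av_2-bv_1$, which is irrational because $v$ is totally irrational and $(a,b)\neq 0$; Weyl equidistribution of the perpendicular coordinate modulo the lattice spacing then forces the orbit to eventually enter some translate of $\widetilde W$, contradicting the fact that it remains in $\pi^{-1}(K)$. If instead $H$ has rank two, then $W$ is fully essential, so $N_\epsilon(K)$ and hence $K$ itself is inessential; letting $\widetilde K_0$ be the connected component of $\pi^{-1}(K)$ containing $\widetilde p$ (a bounded set) and noting that $\widetilde g$ permutes such components, one finds $z_0\in\mathbb{Z}^2$ with $\widetilde g(\widetilde K_0)=\widetilde K_0+z_0$. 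The lift $\widetilde q\mapsto \widetilde g(\widetilde q)-z_0$ of $g$ preserves the bounded set $\widetilde K_0$, and hence has zero rotation vector at $\widetilde p$; this forces $v=z_0\in\mathbb{Z}^2$, contradicting total irrationality.

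The main obstacle I anticipate is the geometric precision of the rank-one case: the strands bounding $\widetilde W$ may not be straight, so a point whose perpendicular coordinate is close to a strand can still be far from the strand in the along-direction, and pure one-dimensional equidistribution does not quite suffice by itself. The cleanest resolution I have in mind is to pass to the cylindrical cover $\mathbb{R}^2/\mathbb{Z}(a,b)$, where the lifts of $\partial W$ become compact loops, and then to invoke joint equidistribution of perpendicular drift together with along-direction rotation to ensure that the orbit visits every neighborhood of these loops. A subsidiary subtlety in the rank-two case is handling possibly disconnected $K$, where one may need to pass first to an iterate of $g$ that fixes the connected component of $K$ containing $p$ before applying the lift argument.
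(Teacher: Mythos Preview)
Your route differs from the paper's. The paper first uses recurrence to get $B_\epsilon(p)\subset\Theta^+_\epsilon(p)$, hence $\bigcup_{i\ge0}g^i(B_\epsilon(p))\subset\Theta^+_\epsilon(p)$, and then invokes the result from \cite{inventiones1} that a recurrent point with totally irrational rotation vector is a \emph{fully essential point}, so this forward orbit of the ball is already fully essential. You instead aim to show that the (in general smaller) set $N_\epsilon(K)$ is fully essential, via a rank dichotomy on an essential component $W$ of its complement.

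There is a genuine gap, and it is not only a matter of precision. In the rank-one case, Birkhoff gives only $\widetilde g^{\,n}(\widetilde p)=\widetilde p+nv+o(n)$; the sublinear error destroys any Weyl-type equidistribution of the perpendicular coordinate, and $\mu$-typicality yields equidistribution with respect to $\mu$, not Lebesgue, so your proposed cylindrical ``joint equidistribution'' fix is unjustified. More decisively, the underlying strategy can fail outright. Take $g(x,y)=(h(x),y+\beta)$ with $h$ a Denjoy circle homeomorphism of rotation number $\alpha$ and $1,\alpha,\beta$ rationally independent. Then the rotation vector $(\alpha,\beta)$ is totally irrational, $K=\operatorname{supp}(\mu)=K_h\times S^1$ with $K_h$ the minimal Cantor set of $h$, and if some gap of $K_h$ has length exceeding $2\epsilon$ then $N_\epsilon(K)=N_\epsilon(K_h)\times S^1$ carries only vertical loops and is \emph{not} fully essential. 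Here $W$ is a gap-annulus with rank-one stabilizer, and the genuine orbit of $\widetilde p$ has first coordinate in $\pi^{-1}(K_h)$ forever, so it never enters any translate $\widetilde W+k u'$ and no contradiction materializes. In this same example $\bigcup_{i\ge0}g^i(B_\epsilon(p))$ \emph{is} fully essential (forward images of the ball eventually swallow every wandering gap), which is precisely what the cited Koropecki--Tal theorem supplies and what your reduction to $N_\epsilon(K)$ discards.
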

\begin{proof}Note first that if \( p \) is a recurrent point, then for any \( \varepsilon' < \varepsilon \), there exists some \( n > 0 \) such that \( \|p - g^n(p)\| < \varepsilon' \). This implies that \( B_{\varepsilon - \varepsilon'}(p) \subset \Theta^+_{\epsilon}(p) \), and then \( B_{\varepsilon}(p) \subset \Theta^+_{\epsilon}(p) \). Consequently, the set \( U_{\epsilon}^{+} = \bigcup_{i \ge 0} g^i(B_{\varepsilon}(p)) \) is contained in \( \Theta^+_{\epsilon}(p) \).
	
	Since \( p \) is recurrent and has a totally irrational rotation vector, it is a fully essential point (see \cite{inventiones1}). Therefore, the forward orbit of \( B_{\varepsilon}(p) \) under \( g \) must also be fully essential, implying that \( U_{\varepsilon}^{+} \), and hence \( \Theta^+_{\epsilon}(p) \), is fully essential. A symmetric argument applies to \( \Theta^-_{\epsilon}(p) \) by considering negative iterates.
\end{proof}

Note that, by a result in \cite{LM}, if \( \rho_{mz}(\widetilde{g}) \) has nonempty interior, then for every vector in the interior of the rotation set there exists a \( g \)-invariant ergodic measure with that rotation vector. In particular, there exist ergodic measures with totally irrational rotation vectors for that case.
	
The following lemma provides a controlled approximation property. For any given point and iteration, there exists a periodic orbit of \( p \) whose starting point is nearby and whose displacement after a bounded number of steps remains close to the displacement of the original orbit. This ensures that the rotation vectors of arbitrary orbits can be uniformly approximated by those of periodic orbits.

\begin{lemma}\label{magiclemma}
Assume that \( \rho_{mz}^\circ(\widetilde{g}) \neq \emptyset \). Given \( p \in \mathbb{T}^2 \) there are constants \( N_0 \in \mathbb{N} \) and \( L > 0 \) such that for every \( (x, \widetilde{q}) \in \Sigma \times \mathbb{R}^2 \) and \( n > 0 \) there is a vector \( w \in \mathbb{Z}^2 \), a sequence \( y \in \Sigma \) and a lift $\widetilde p\in\pi^{-1}(p)$ satisfying 
\begin{itemize}
	\item \( \widetilde{f}_y^{n'}(\widetilde{p}) = \widetilde{p} + w \), with \( n \le n' \le n + N_0 \)
	\item \( \|\widetilde{p} - \widetilde{q}\| < L \) and \( \|\widetilde{f}_y^{n'}(\widetilde{p})- \widetilde{f}_x^n(\widetilde{q})\| < L \).
\end{itemize} 
\end{lemma}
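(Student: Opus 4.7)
The plan is to fix a single recurrent base point $p\in\mathbb T^2$ by combining the remark just before this lemma (the hypothesis $\rho_{mz}^\circ(\widetilde g)\neq\emptyset$ provides, via \cite{LM}, $g$-invariant ergodic measures with totally irrational rotation vectors) with the preceding lemma: for a typical recurrent point $p$ of such a measure, both $\Theta^+_\epsilon(p)$ and $\Theta^-_\epsilon(p)$ are fully essential open subsets of $\mathbb T^2$. Since each is the increasing union of the open truncations $\Theta^{\pm,k}_\epsilon(p)$ and is dense in $\mathbb T^2$, compactness of $\mathbb T^2$ yields a finite $N_1$ with both $\bigcup_{k\le N_1}\Theta^{+,k}_\epsilon(p)$ and $\bigcup_{k\le N_1}\Theta^{-,k}_\epsilon(p)$ being $\delta$-dense in $\mathbb T^2$. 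I would fix $\delta>0$ small enough, using the modulus of continuity $\omega_{\widetilde g}$ of $\widetilde g$, that any residual offset of size $O(\delta)$ can be absorbed inside a single extra $\epsilon$-perturbation step, i.e.\ $\omega_{\widetilde g}(\delta)<\epsilon$.

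Given $(x,\widetilde q,n)$, set $\widetilde q':=\widetilde f_x^n(\widetilde q)$, let $\widetilde p$ be the lift of $p$ closest to $\widetilde q$ (so $\|\widetilde p-\widetilde q\|\le\sqrt 2/2$), and let $w\in\mathbb Z^2$ be defined so that $\widetilde p+w$ is the lift of $p$ closest to $\widetilde q'$. I would then construct $y$ as the concatenation of three blocks: a prefix $y^{\mathrm{in}}$ of length $k_1\le N_1+1$ taking $\widetilde p$ exactly to $\widetilde q$; the middle block $y^{\mathrm{mid}}=x_{[0,n-1]}$, which sends $\widetilde q$ deterministically to $\widetilde q'$; and a tail $y^{\mathrm{out}}$ of length $k_2\le N_1+1$ taking $\widetilde q'$ exactly to $\widetilde p+w$. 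The prefix is realized by first producing a pseudo-orbit of length $\le N_1$ from $\widetilde p$ ending at a point $\widetilde q_1$ within distance $\delta$ of $\widetilde g^{-1}(\widetilde q)$, and then appending one additional step $y_{k_1}:=\widetilde q-\widetilde g(\widetilde q_1)$, of norm at most $\omega_{\widetilde g}(\delta)<\epsilon$, which lands exactly at $\widetilde q$; the tail is built symmetrically using $\Theta^-_\epsilon(p)$ by time reversal. Setting $N_0:=2N_1+2$ and $L:=\sqrt 2/2+1$, the length bound $n'=k_1+n+k_2\in[n,n+N_0]$ and the two distance conditions follow by construction.

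The main technical difficulty is the phrase ``a pseudo-orbit from $\widetilde p$ ending within $\delta$ of $\widetilde g^{-1}(\widetilde q)$''. What the compactness argument directly yields is $\mathbb T^2$-density of $\bigcup_{k\le N_1}\Theta^{+,k}_\epsilon(p)$, whereas the construction requires density in $\mathbb R^2$ of the set of endpoints of pseudo-orbits starting at the specific lift $\widetilde p$, near the prescribed lift $\widetilde g^{-1}(\widetilde q)$. Upgrading $\mathbb T^2$-density to $\mathbb R^2$-density near a chosen lift uses full essentialness of $\Theta^+_\epsilon(p)$, which forces $\pi^{-1}(\Theta^+_\epsilon(p))$ to be connected; combined with the uniform per-step displacement bound $\|\widetilde g-\mathrm{id}\|_\infty+\epsilon$, this guarantees (after possibly enlarging $N_1$, uniformly in $(x,\widetilde q,n)$) that every lift of a $\delta$-approximant of $g^{-1}(q)$ lying within a bounded neighborhood of $\widetilde p$ is actually reachable from $\widetilde p$ by a pseudo-orbit of length $\le N_1$. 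Since $\widetilde g^{-1}(\widetilde q)$ sits within $\|\widetilde g^{-1}-\mathrm{id}\|_\infty+\sqrt 2/2$ of $\widetilde p$, this applies, and the symmetric argument handles $y^{\mathrm{out}}$.
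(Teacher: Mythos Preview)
Your concatenation scheme (prefix from $p$, middle block $x_{[0,n-1]}$, suffix back to $p$) is the same skeleton the paper uses, but two of the load-bearing steps in your version do not hold up.

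\textbf{Density is not available.} You write that $\Theta^{+}_\epsilon(p)$ and $\Theta^{-}_\epsilon(p)$ are dense in $\mathbb T^2$, and then extract $N_1$ by compactness so that the truncated unions are $\delta$-dense. The preceding lemma only gives that these sets are \emph{fully essential}; it does not give density, and in fact density can fail. If $g$ has a repelling periodic orbit (which is perfectly compatible with $\rho_{mz}^\circ(\widetilde g)\neq\emptyset$), a small neighbourhood of that orbit will typically lie outside $\Theta^+_\epsilon(p)$ for small $\epsilon$. So for $q$ near such a repeller there is no point of $\Theta^+_\epsilon(p)$ within $\delta$ of $g^{-1}(q)$, and your prefix construction breaks down.

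\textbf{The lift-selection step is not justified.} You fix $\widetilde p$ as the lift of $p$ closest to $\widetilde q$ and then try to build a bounded-length pseudo-orbit from that particular $\widetilde p$ to $\widetilde q$. Your argument is that $\pi^{-1}(\Theta^+_\epsilon(p))$ is connected (true, by full essentialness) and that the per-step displacement is bounded; but neither fact controls the set $\widetilde\Theta^+_\epsilon(\widetilde p)=\{\widetilde f_x^k(\widetilde p):x\in\Sigma,\ k\ge 0\}$ of points actually reachable from the chosen lift. That set is an open subset of $\pi^{-1}(\Theta^+_\epsilon(p))$ which projects onto $\Theta^+_\epsilon(p)$, and connectedness of the ambient preimage says nothing about whether $\widetilde\Theta^+_\epsilon(\widetilde p)$ covers a neighbourhood of $\widetilde p$; it could, for instance, lie entirely in a half-plane. (Note that the lemma allows $\widetilde p$ to be part of the \emph{output}: if you instead let the prefix pseudo-orbit determine $\widetilde p$ a posteriori, this obstacle disappears --- but then you are back to needing density for the prefix to exist at all.)

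The paper bypasses both issues with a topological intersection argument. Using full essentialness it picks fundamental domains $R_1,R_2$ with $\partial R_1\subset\Theta^+_\epsilon(p)$ and $\partial R_2\subset\Theta^-_\epsilon(p)$; compactness of the boundaries (not of $\mathbb T^2$) then gives a uniform $N_1$ with $\partial R_i\subset\bigcup_{k\le N_1}\Theta^{\pm,k}_\epsilon(p)$. Given $(x,\widetilde q,n)$ one chooses lifts $\widetilde R_1\ni\widetilde q$ and $\widetilde R_2\ni\widetilde f_x^n(\widetilde q)$; since $\widetilde f_x^n(\mathrm{Cl}(\widetilde R_1))$ is itself a closed fundamental domain meeting $\mathrm{Cl}(\widetilde R_2)$, their boundaries must intersect, producing $\widetilde q'\in\partial\widetilde R_1$ with $\widetilde f_x^n(\widetilde q')\in\partial\widetilde R_2$. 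Now $q'=\pi(\widetilde q')$ already lies in $\Theta^{+}_\epsilon(p)$ (no density needed), and lifting the prefix pseudo-orbit to end at $\widetilde q'$ \emph{determines} $\widetilde p$ (no lift-selection problem); the suffix is handled symmetrically, and the constants come from the diameters of $R_1,R_2$ plus the uniform displacement bound over $\le N_1$ steps.
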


\begin{proof}


We begin by using the previous lemma to select a point \( p \in \mathbb{T}^2 \) such that both \( \Theta^{+}_{\epsilon}(p) \) and \( \Theta^{-}_{\epsilon}(p) \) are fully essential sets. It is important to observe that the future election of $N_0$ and $L$ depend only on $p$.

Since \( \Theta^{+}_{\epsilon}(p) \) is fully essential, there exists a dense open disk \( R_1 \) whose boundary consists of two essential, non-homotopically trivial curves contained in \( \Theta^{+}_{\epsilon}(p) \). Similarly, there exists a fundamental domain \( R_2 \) whose boundary is composed of essential curves lying in \( \Theta^{-}_{\epsilon}(p) \). As \( \partial R_1 \) is compact and \( \Theta^{+}_{\epsilon}(p) = \bigcup_{k} \Theta^{+,k}_{\epsilon}(p) \), there exists an integer \( N_2 > 0 \) such that \( \partial R_1 \subset \bigcup_{k=1}^{N_2} \Theta^{+,k}_{\epsilon}(p) \). Likewise, there exists \( N_1 > N_2 \) such that \( \partial R_2 \subset \bigcup_{k=1}^{N_1} \Theta^{-,k}_{\epsilon}(p) \). Let \( L_1 = \max\{ \text{diam}(R_1), \text{diam}(R_2) \} \).

For a given $n>0$ and \( (x, \widetilde{q}) \in \Sigma \times \mathbb{R}^2 \) we need to find \( \widetilde{q}' \in \mathbb{R}^2 \) satisfying $$ \| \widetilde{q} - \widetilde{q}' \| < L_1 \text{ and }\|\widetilde{f}_x^n(\widetilde{q}) - \widetilde{f}_x^n(\widetilde{q}') \| < L_1 $$with \( \pi(\widetilde{q}') \in \Theta^{+,N_1}_{\epsilon}(p) \) and \( \pi(\widetilde{f}_x^n(\widetilde{q}')) \in \Theta^{-,N_1}_{\epsilon}(p) \). Thus,  let \( \widetilde{R}_1 \) be a lift of \( R_1 \) such that \( \text{Cl}(\widetilde{R}_1) \) contains \( \widetilde{q} \), and let \( \widetilde{R}_2 \) be a lift of \( R_2 \) such that \( \text{Cl}(\widetilde{R}_2) \) contains \( \widetilde{f}_x^n(\widetilde{q}) \). Since both \( \text{Cl}(\widetilde{R}_1) \) and \( \text{Cl}(\widetilde{R}_2) \) are fundamental domains and \( \widetilde{f}_x^n(\text{Cl}(\widetilde{R}_1)) \) intersects \( \text{Cl}(\widetilde{R}_2) \), there exists a point \( \widetilde{q}' \in \partial \widetilde{R}_1 \) such that \( \widetilde{f}_x^n(\widetilde{q}') \in \partial \widetilde{R}_2 \). Clearly \( \widetilde{q}' \) satisfies all the required properties.

Now, since \( q' = \pi(\widetilde{q}') \in \partial R_1 \subset \Theta^{+,N_1}_{\epsilon}(p) \), there exists a word \( z^1 \in \Sigma \) and an integer \( 0 \le m_1 \le N_1 \) such that \( f_{z^1}^{m_1}(p) = q' \), here we choose the lift $\widetilde{p}$ such that $\widetilde f_{z^1}^{m_1}(\widetilde p) =\widetilde  q' $. Similarly, since \( \pi(\widetilde{f}_x^n(\widetilde{q}')) \in \partial R_2 \subset \Theta^{-,N_1}_{\epsilon}(p) \), there exists a word \( z^2 \in \Sigma \) and an integer \( 0 \le m_2 \le N_1 \) such that \( f_{z^2}^{m_2}(f_x^n(q')) = p \). 

We fix as \( L_2 > 0 \) the constant satisfying that for all \( (z, \widetilde{r}) \in \Sigma \times \mathbb{R}^2 \) and every \( 0 \le i \le N_1 \), we have \( \| \widetilde{f}_z^i(\widetilde{r}) - \widetilde{r} \| < L_2 \). Let \( L = L_1 + L_2 \), \( N_0 = 2N_1 \), and \( n' = n + m_1 + m_2 \). Define the word \( y \in \Sigma \) by
\[
y_i = 
\begin{cases}
	z^1_i & \text{for } i = 1, \ldots, m_1, \\
	x_{i - m_1 - 1} & \text{for } i = m_1 + 1, \ldots, m_1 + n, \\
	z^2_{i - m_1 - n - 1} & \text{for } i > m_1 + n.
\end{cases}
\]
Taking \( w = \widetilde{f}_y^{n'}(\widetilde{p}) - \widetilde{p} \) and considering that
$$\begin{aligned}\|\widetilde f^{n'}_y(\widetilde p)-\widetilde f^n_x(\widetilde q)\|=&\|\widetilde f^{m_2}_{z^2}\widetilde f_x^{n}\widetilde f_{z^1}^{m_1}(\widetilde p)-\widetilde f^n_x(\widetilde q)\|\\
=& \|\widetilde f^{m_2}_{z^2}\widetilde f_x^{n}(\widetilde q')-\widetilde f^n_x(\widetilde q)\|\\
\leq &  \|\widetilde f^{m_2}_{z^2}(\widetilde f_x^{n}(\widetilde q'))-\widetilde f_x^{n}(\widetilde q')\|+\|\widetilde f_x^{n}(\widetilde q')-\widetilde f^n_x(\widetilde q)\| \\
\leq &L_2+L_1=L	\end{aligned}$$
we conclude the proof. 
\end{proof}

\begin{prop}
	If $\rho_{mz}^\circ(g)\neq \emptyset$, then $\rho_{mz}(\widetilde F)$ is convex. 
\end{prop}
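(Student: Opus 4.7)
The plan is to use Lemma~\ref{magiclemma} to convert trajectories approximating $v_1,v_2\in\rho_{mz}(\widetilde F)$ into integer‑displacement closed loops based at the fixed point $p\in\T$, and then concatenate such loops in rational proportions to produce any convex combination of $v_1$ and $v_2$. Since $\rho_{mz}(\widetilde F)$ is closed (as a $\limsup$ of sets), it suffices to realize each vector $v=\lambda v_1+(1-\lambda)v_2$, $\lambda\in[0,1]$, as a limit of averaged displacements $\rho_{N_j}(z_j,\widetilde p_j)$ with $N_j\to\infty$.

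First I would fix $p$ together with the constants $N_0$ and $L$ given by Lemma~\ref{magiclemma}. By definition of $\rho_{mz}$, choose sequences $(x_k,\widetilde q_k,n_k)$ and $(x'_l,\widetilde r_l,m_l)$ with $n_k,m_l\to\infty$ whose averaged displacements converge to $v_1$ and $v_2$ respectively. Applying Lemma~\ref{magiclemma} to each yields words $y^k,y'^{\,l}\in\Sigma$, integers $n'_k\in[n_k,n_k+N_0]$ and $m'_l\in[m_l,m_l+N_0]$, lifts $\widetilde p_k,\widetilde p'_l\in\pi^{-1}(p)$, and integer vectors $w_k,w'_l\in\Z^2$ with $\widetilde f_{y^k}^{n'_k}(\widetilde p_k)=\widetilde p_k+w_k$ and $\widetilde f_{y'^{\,l}}^{m'_l}(\widetilde p'_l)=\widetilde p'_l+w'_l$. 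The spatial estimates of the lemma and the triangle inequality give $\|w_k-(\widetilde f_{x_k}^{n_k}(\widetilde q_k)-\widetilde q_k)\|\le 2L$, so together with $n'_k/n_k\to 1$ we obtain $w_k/n'_k\to v_1$, and analogously $w'_l/m'_l\to v_2$. Next, for each $j$ I would choose indices $k(j),l(j)$ and positive integers $a_j,b_j$ such that
\[
\lambda_j:=\frac{a_j\,n'_{k(j)}}{a_j\,n'_{k(j)}+b_j\,m'_{l(j)}}\longrightarrow\lambda,\qquad N_j:=a_j\,n'_{k(j)}+b_j\,m'_{l(j)}\to\infty,
\]
and form $z_j\in\Sigma$ by concatenating $a_j$ copies of the length‑$n'_{k(j)}$ block from $y^{k(j)}$ followed by $b_j$ copies of the length‑$m'_{l(j)}$ block from $y'^{\,l(j)}$. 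Since $F$ is locally constant and every lift $\widetilde f_i$ commutes with $\Z^2$‑translations, an induction on the number of blocks yields
\[
\widetilde f_{z_j}^{N_j}(\widetilde p_{k(j)})-\widetilde p_{k(j)} \;=\; a_j w_{k(j)}+b_j w'_{l(j)},
\]
and dividing by $N_j$ rewrites this average as $\lambda_j\,(w_{k(j)}/n'_{k(j)})+(1-\lambda_j)\,(w'_{l(j)}/m'_{l(j)})\to v$, placing $v$ in $\rho_{mz}(\widetilde F)$.

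The genuinely hard step is Lemma~\ref{magiclemma} itself, whose proof rests on the full essentiality of $\Theta^{\pm}_\epsilon(p)$; once closed loops based at $p$ are available, the above is bookkeeping. The only point requiring minor care is the displacement induction: because the cocycle is locally constant, each repetition of a block applies the same ordered list of fiber maps, and because integer translations commute with each $\widetilde f_i$, a possible integer offset between $\widetilde p_{k(j)}$ and $\widetilde p'_{l(j)}$ is harmless, so each additional block advances the orbit by exactly $w_{k(j)}$ (resp.\ $w'_{l(j)}$). Arranging $\lambda_j\to\lambda$ is elementary, since $a_j$ and $b_j$ range freely over $\N$ while $n'_{k(j)},m'_{l(j)}$ are positive integers.
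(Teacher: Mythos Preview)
Your proposal is correct and follows essentially the same approach as the paper: both use Lemma~\ref{magiclemma} to replace approximating trajectories by integer-displacement loops based at the distinguished point $p$, then concatenate these loops in appropriate proportions (using local constancy and $\Z^2$-equivariance) to realize convex combinations. The only cosmetic differences are that the paper first restricts to rational $\alpha=a_1/(a_1+a_2)$ and uses $a_1M_2$ and $a_2M_1$ copies of the two blocks to hit $\alpha$ exactly, producing a genuine $F$-periodic point, whereas you approximate an arbitrary $\lambda$ via free choices of $a_j,b_j$; and you are a bit more explicit than the paper about why the possible integer offset between the lifts $\widetilde p_{k(j)}$ and $\widetilde p'_{l(j)}$ is harmless.
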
	

\begin{proof}
As in the previous lemma, we select a point \( p_0 \in \mathbb{T}^2 \) such that both \( \Theta^{+}_{\epsilon}(p_0) \) and \( \Theta^{-}_{\epsilon}(p_0) \) are fully essential sets.

Our goal is to prove that given two points \( v_1, v_2 \in \rho_{mz}(\widetilde F) \), any convex combination of them is contained in \( \rho_{mz}(\widetilde F) \). We will show, for \( \alpha \in [0,1] \cap \mathbb{Q} \), that \( \alpha v_1 + (1-\alpha)v_2 \in \rho_{mz}(\widetilde F) \), which is sufficient since \(\rho_{mz}\) is closed.

For each $i\in\{1,2\}$, let \((x^{i,k}, p^i_k)_k\) be a sequence such that
\[
\lim_{k\to\infty} \frac{\widetilde{f}^{n_{i,k}}_{x^{i,k}}(\widetilde{p}^i_k) - \widetilde{p}^i_k}{n_{i,k}} = v_i.
\]
 Lemma~\ref{magiclemma} implies that for the sequences \((x^{i,k}, p^i_k)\) and iterates \(n_{i,k}\), there exist sequences \(y^{i,k}\), integers \(m_{i,k}\) with \(n_{i,k} \le m_{i,k} \le n_{i,k} + N_1\), and vectors \(w_{i,k} \in \mathbb{Z}^2\) such that:
\begin{itemize}
	\item \(\widetilde{f}^{m_{i,k}}_{y^{i,k}}(\widetilde{p}_0) - \widetilde{p}_0 = w_{i,k}\),
	\item \(\| w_{i,k} - (\widetilde{f}^{n_{i,k}}_{x^{i,k}}(\widetilde{p}^i_k) - \widetilde{p}^i_k) \| < 2L\).
\end{itemize}
Since
$$\begin{aligned}
	\left\| \frac{w_{i,k}}{m_{i,k}} -v_i\right\|&\leq \left\|\frac{w_{i,k}-(\widetilde{f}^{n_{i,k}}_{x^{i,k}}(\widetilde{p}^i_k)-\widetilde{p}^i_k)}{m_{i,k}}\right\|+\left\|\frac{\widetilde{f}^{n_{i,k}}_{x^{i,k}}(\widetilde{p}^i_k) - \widetilde{p}^i_k}{m_{i,k}}-v_i\right\|\\
	&\leq \left\| \frac{2L}{m_{i,k}}\right\|+\left\|\left(\frac{n_{i,k}}{m_{i,k}}\right)\frac{\widetilde{f}^{n_{i,k}}_{x^{i,k}}(\widetilde{p}^i_k) - \widetilde{p}^i_k}{n_{i,k}}-v_i\right\|,
\end{aligned}
$$
and $n_{i,k}/m_{i,k}$ approaches to $1$ as $k$ increases, the limit of ${w_{i,k}}/{m_{i,k}}$ is $v_i$ for each $i$. Fixing \(\delta > 0\) and taking \(k\) sufficiently large such that both \(\| \frac{w_{i,k}}{m_{i,k}} - v_i \| < \delta\), we rename 
$y^{i,k} = y^i, \  m_{i,k} = M_i, \  w_{i,k} = w_i.$

Let \(\alpha = \frac{a_1}{(a_1 + a_2) }\) for positive integers \(a_1, a_2\), we see inductively that if we take \(z\) to be a periodic word of period \((a_1 + a_2) M_1 M_2\) defined as
\[
z_i = 
\begin{cases}
	y^{1}_{i - [\frac{i}{M_1}] M_1} & \text{for } 0 \leq i \le a_1 M_1 M_2, \\
	y^2_{i - a_1 M_1 M_2 - [\frac{i}{M_2}]M_2} & \text{for } a_1 M_1 M_2+1 \le i < (a_1+a_2) M_1 M_2,
\end{cases}
\]
then
\begin{align*}
	\widetilde{f}^{j M_1}_z(\widetilde{p}_0) - \widetilde{p}_0 &= j w_1 \quad \text{for } 0 \le j \le a_1 M_2, \\
	\widetilde{f}^{a_1 M_1 M_2 + j M_2}_z(\widetilde{p}_0) - \widetilde{p}_0 &= a_1 M_2 w_1 + j w_2 \quad \text{for } 0 \le j \le a_2 M_1.
\end{align*}
Therefore, \((z, \widetilde p_0)\) is a periodic point of period $(a_1 + a_2) M_1 M_2$ for \(F\), and its rotation vector is
\[
\frac{a_1 M_2 w_1 + a_2 M_1 w_2}{(a_1 + a_2) M_1 M_2} = \alpha \frac{w_1}{M_1} + (1 - \alpha) \frac{w_2}{M_2},
\]
which is \(\delta\)-close to \(\alpha v_1 + (1 - \alpha) v_2\).
\end{proof}

\end{document}